% !TEX program = xelatex
\documentclass[12pt,a4paper]{amsart}
\usepackage{setspace}
\usepackage[inner=2.5cm,outer=2.5cm,bottom=2cm]{geometry}
%%%%%%%%%%%%%%%%%%%%%%%%%%%%%%%%%%%%%%%%%%%%%%%%%%%%%%%%%
%%%%%%%%%%%%%%%%%%%%%%%%%%%%%%%%%%%%%%%%%%%%%%%%%%%%%%%%%
%%%%                                                 %%%%
%%%%                                                 %%%%
%%%%                    paquetes                     %%%%
%%%%                                                 %%%%
%%%%                                                 %%%%
%%%%%%%%%%%%%%%%%%%%%%%%%%%%%%%%%%%%%%%%%%%%%%%%%%%%%%%%%
%%%%%%%%%%%%%%%%%%%%%%%%%%%%%%%%%%%%%%%%%%%%%%%%%%%%%%%%%
\usepackage{amsmath, amssymb, amsthm, amsfonts}
\usepackage{tikz}
\usetikzlibrary{arrows, babel, knots}
\usepackage{tikz-cd}
\usepackage{xurl}
\usepackage{hyperref}
\usepackage{wrapfig}
\usepackage{multicol}
\usepackage{capt-of}
\usepackage[capitalise,noabbrev]{cleveref}
\definecolor{darkslate}{HTML}{335C67}
\definecolor{vanilla}{HTML}{FFF3B0}
\definecolor{honey}{HTML}{E09F3E}
\definecolor{brownred}{HTML}{9E2A2B}
\hypersetup{colorlinks=true, linkbordercolor=brownred,linkcolor=brownred}
\hypersetup{citecolor=brownred}
\pgfdeclarelayer{background}
\pgfsetlayers{background,main}

%%%%%%%%%%%%%%%%%%%%%%%%%%%%%%%%%%%%%%%%%%%%%%%%%%%%%%%%%
%%%%%%%%%%%%%%%%%%%%%%%%%%%%%%%%%%%%%%%%%%%%%%%%%%%%%%%%%
%%%%                                                 %%%%
%%%%                                                 %%%%
%%%%                    Entornos                     %%%%
%%%%                                                 %%%%
%%%%                                                 %%%%
%%%%%%%%%%%%%%%%%%%%%%%%%%%%%%%%%%%%%%%%%%%%%%%%%%%%%%%%%
%%%%%%%%%%%%%%%%%%%%%%%%%%%%%%%%%%%%%%%%%%%%%%%%%%%%%%%%%

\newtheorem{thm}{Theorem}[section]
\newtheorem{cor}[thm]{Corollary}
\newtheorem{lem}[thm]{Lemma}
\newtheorem{prop}[thm]{Proposition}
\theoremstyle{definition}
\newtheorem{defn}[thm]{Definition}
\theoremstyle{remark}
\newtheorem{rmk}[thm]{Remark}

\newtheorem{exm}[thm]{Example}

%%%%%%%%%%%%%%%%%%%%%%%%%%%%%%%%%%%%%%%%%%%%%%%%%%%%%%%%%
%%%%%%%%%%%%%%%%%%%%%%%%%%%%%%%%%%%%%%%%%%%%%%%%%%%%%%%%%
%%%%                                                 %%%%
%%%%                                                 %%%%
%%%%                    Letter                       %%%%
%%%%                                                 %%%%
%%%%                                                 %%%%
%%%%%%%%%%%%%%%%%%%%%%%%%%%%%%%%%%%%%%%%%%%%%%%%%%%%%%%%%
%%%%%%%%%%%%%%%%%%%%%%%%%%%%%%%%%%%%%%%%%%%%%%%%%%%%%%%%%
%%%%%%%Caligrafic letter

\newcommand{\CB}{\mathcal{B}}

\newcommand{\CP}{\mathcal{P}}
\newcommand{\CT}{\mathcal{T}}
\newcommand{\CH}{\mathcal{H}}
\newcommand{\CQ}{\mathcal{Q}}

% %%%%%%%Blod letter
\newcommand{\NC}{{\mathbf C}}
\newcommand{\NR}{{\mathbf R}}
\newcommand{\NQ}{{\mathbf Q}}
\newcommand{\NT}{{\mathbf T}}
%%%%%%Mathematical letter

\def\NN{\ensuremath{\mathbb{N}}}
\def\QQ{\ensuremath{\mathbb{Q}}}
\def\RR{\ensuremath{\mathbb{R}}}

\def\ZZ{\ensuremath{\mathbb{Z}}}

%%%%%%%%%%%%%%%%%%%%%%%%%%%%%%%%%%%%%%%%%%%%%%%%%%%%%%%%%
%%%%%%%%%%%%%%%%%%%%%%%%%%%%%%%%%%%%%%%%%%%%%%%%%%%%%%%%%
%%%%                                                 %%%%
%%%%                                                 %%%%
%%%%                    Comandos                     %%%%
%%%%                                                 %%%%
%%%%                                                 %%%%
%%%%%%%%%%%%%%%%%%%%%%%%%%%%%%%%%%%%%%%%%%%%%%%%%%%%%%%%%
%%%%%%%%%%%%%%%%%%%%%%%%%%%%%%%%%%%%%%%%%%%%%%%%%%%%%%%%%
\newcommand{\ModelCategoryDefinition}[3]{
\begin{description}
\item[$\overset\sim\to$] #1
\item[$\hookrightarrow$] #2
\item[$\twoheadrightarrow$] #3
\end{description}}

\newcommand{\coker}{\operatorname{coker}}
\newcommand{\im}{\operatorname{Im}}

\newcommand{\cyl}{\operatorname{cyl}}
\newcommand{\cocyl}{\operatorname{cocyl}}

\newcommand{\Fun}{\operatorname{Fun}}

\newcommand{\colim}{\operatorname{colim}}
\DeclareMathOperator*{\scolim}{colim}
\newcommand{\hocolim}{\operatorname{hocolim}}
\newcommand{\Ab}{{\operatorname{Ab}}}
\newcommand{\Ch}{\operatorname{Ch}}
\newcommand{\Mod}{\operatorname{-Mod}}
\newcommand{\op}{{\operatorname{op}}}
\newcommand{\Aut}{{\operatorname{Aut}}}
\newcommand{\direct}[1]{\overrightarrow{#1}}
\newcommand{\invers}[1]{\overleftarrow{#1}}
\newcommand{\alt}{\mathrm{height}}
\newcommand{\depth}{\mathrm{depth}}
\renewcommand{\lim}{\operatorname{lim}}
\DeclareMathOperator*{\slim}{lim}
\newcommand{\End}{\operatorname{End}}

\title[]{A homotopical algebra approach to higher limits}
\author[G. Carrión Santiago]{Guille {Carrión~Santiago}}
\address{Departamento de Matemática Aplicada, Ciencia e Ingeniería de los Materiales y Tecnología Electrónica, Universidad Rey Juan Carlos, 28933 Móstoles (Madrid), Spain}
\email{guille.carrions@urjc.es}
\date{2026}
\subjclass[2020]{
Primary 18G10,  % Resolutions; derived functors (category-theoretic aspects)
18N40;          % Homotopical algebra, Quillen model categories
55P99;          % None of the above, but in this section (homotopy theory)
Secondary 06A07,% Combinatorics of partially ordered sets
05C65,          % Hypergraphs
57K18           % Khovanov homology
}
\keywords{
Higher limits, fibrant replacement, model category, vanishing bounds, poset, hypergraph cohomology, Khovanov homology, Mackey functors.
}

\usepackage{todonotes}

\begin{document}
\begin{abstract}
We develop a new technique for computing higher limits of functors over filtered posets by constructing explicit fibrant replacements within a suitable model category structure. We apply this procedure to develop two systematic vanishing bounds for the higher limits. For the first one, we define a combinatorial bound derived from the poset's Hasse diagram. This bound systematically outperforms classical poset-length bounds and provides critical stopping criteria for computing the sheaf cohomology of hypergraphs and higher-order interaction networks. The second one is an inductive bound driven by the local algebraic information of the functor. We demonstrate the robustness of this theoretical framework by applying it to two distinct domains: establishing vanishing bounds for Mackey functors over posets with local quasi-units, and bounding the unnormalised Khovanov homology of the torus knot \(T(3,4)\).
\end{abstract}
\maketitle
\section*{Introduction}

Higher limits, the right-derived functors of $\lim$, naturally arise in homotopy theory, homological algebra, and combinatorics, and  admit an interpretation as a generalised cohomology with local coefficients in a functor $F$ over a small category $\CP$,
\[
H^i(\CP;F) =\operatorname{R}^i\lim_\CP(-)(F).
\]
In the realm of homotopy theory, higher limits have proven to be an invaluable tool for studying the cohomology groups of spaces built from pieces via a colimit diagram. In their seminal work \cite{Bousfield1972}, Bousfield and Kan established a spectral sequence whose $E_2$-page relates the cohomology of the homotopy colimit with the higher limits of the cohomology of the objects in the diagram as follows:
\[
H^i(\CP;H^j(F))\Longrightarrow H^{i+j}(\hocolim_\CP F).
\]

Higher limits also control an obstruction theory for the existence and uniqueness of maps out of a homotopy colimit \cite{Wojtkowiak}; see \cite{Jackowski1994} for an overview. There are more recent examples of related problems, such as the obstruction for the existence and uniqueness of the classifying space of a fusion system, as presented in Broto, Levi, and Oliver's work \cite{Broto2003}, or the existence of homotopy representations for compact $p$-local groups, shown by Cantarero and Castellana \cite{CC17}.

The most significant results regarding the $\lim$-acyclicity of functors over orbit categories are linked to Mackey functors \cite{Jackowski1992, Diaz2015} and Lambda functors \cite{JMO1992b, JMO1992a}. The fact that higher limits over an orbit category can be reduced to higher limits over posets \cite{Slominska2001,MR4438061,MR4709409} motivates the development of combinatorial methods to compute higher limits over posets, or at least provide vanishing conditions. Acyclicity conditions in the context of posets, such as the Mittag-Leffler conditions, are widely recognised \cite{Weibel1994}, along with conditions related to projectivity \cite{DiazRamos2009}, lower factoring sections \cite{KishimotoLevi22}, and Mackey functors over posets \cite{CD23}. While these results provide conditions for complete acyclicity, our approach focuses on deriving explicit vanishing bounds.

Beyond pure mathematics, the language of functors over posets has recently proved its worth in applied mathematics. In particular, cellular sheaf-theoretic methods — that is, functors over posets — provide a natural framework for topological data analysis \cite{Curry14} and the cohomological study of networks \cite{Robinson2017, HansenGhrist2021}.

The framework to compute higher limits of functors over posets presented here is based on the notion of fibrant replacement. We set a model category structure in the category of functors from $\CP^\op$ to $\Ch(R)$, the category of chain complexes of $R$-modules, in which higher limits of functors can be computed systematically by means of fibrant replacements.\smallskip

\noindent \textbf{\textcolor{brownred}{\cref{hig_lim_fibrant_rep}.}}\,
\textit{ Let $\CP$ be a filtered poset  and $R$ be a unital commutative ring. Given a functor $F\colon \CP^\op\to R\Mod$, we have
\[
H^*(\CP;F)=H^*(\lim\NR F),
\]
where $\NR F\colon \CP^\op\to \Ch(R)$ is a fibrant replacement of $F$, see Proposition~\ref{thm:model_category}.}\medskip

The inductive structure of $\CP$ reduces the construction of a fibrant replacement of $F\colon \CP^\op\to R\Mod$ to a factorisation problem at each $p\in \CP$ (see Definition~\ref{def_fibrant_rep}):
\[
\begin{tikzcd}
F(p) \arrow[r,"\epsilon"] \arrow[d, "\sim", dashed] & \lim_{\CP_{<p}}F \arrow[d,"\eta"] \\
\NR F(p) \arrow[r, two heads, dashed]    & \lim_{\CP_{<p}}\NR F.  
\end{tikzcd} 
\]
This inductive approach has a clear advantage: we can track the local information to produce vanishing conditions as shown in \cite{CD23} or \cite{CD-shell}.  In this paper, the combinatorial nature of this inductive procedure gives an explicit handle on the height of $\NR F$, which is precisely what controls the vanishing of the higher limits. We present two such bounds: both sharpen the classical one, given by the length of the poset, by tracking more carefully how the height of the fibrant replacement evolves along the inductive construction.

\begin{minipage}{.29\textwidth}
\centering
\begin{tikzpicture}[scale=0.9,
poset_node/.style={circle, draw=darkslate!80, fill=darkslate!10, thick, inner sep=1.5pt, minimum size=6mm},
poset_node_label1/.style={circle, draw=vanilla!90!black, fill=vanilla!30, thick, inner sep=1.5pt, minimum size=6mm},
poset_node_label2/.style={circle, draw=honey!80, fill=honey!10, thick, inner sep=1.5pt, minimum size=6mm},
poset_node_label3/.style={circle, draw=brownred!80, fill=brownred!10, thick, inner sep=1.5pt, minimum size=6mm},
connection/.style={->, >=latex, thick, darkslate!70},x=1.5cm,y=1.2cm
]
\node[poset_node] (A) at (0, 0) {$0$};

\node[poset_node_label1] (B1) at (-1, 1) {$1$};
\node[poset_node_label1] (B2) at (1, 1) {$1$};

\node[poset_node_label1] (C1) at (-1, 2) {$1$};
\node[poset_node_label1] (C2) at (0, 2) {$1$};
\node[poset_node_label1] (C3) at (1, 2) {$1$};

\node[poset_node_label2] (D1) at (-1, 3) {$2$};
\node[poset_node_label1] (D2) at (1, 3) {$1$};

\node[poset_node_label2] (E1) at (-1, 4) {$2$};
\node[poset_node_label1] (E2) at (1, 4) {$1$};

\node[poset_node_label3] (F) at (0, 5) {$3$};

\draw[connection] (A) -- (B1);
\draw[connection] (A) -- (B2);

\draw[connection] (B1) -- (C1);
\draw[connection] (B2) -- (C2);
\draw[connection] (B2) -- (C3);

\draw[connection] (C1) -- (D1);
\draw[connection] (C2) -- (D1);
\draw[connection] (C2) -- (D2);

\draw[connection] (D1) -- (E1);
\draw[connection] (D2) -- (E2);

\draw[connection] (E1) -- (F);
\draw[connection] (E2) -- (F);

\end{tikzpicture}
\captionof{figure}{}
\label{Intro/fig/poset_label}
\end{minipage}\hfill
\begin{minipage}{.59\textwidth}
The first vanishing bound depends only on the geometry of the poset: it replaces the classical bound given by the poset length with a sharper labelling function that tracks the maximum possible height of the fibrant replacement of any functor in each step.

The labelling of a poset $\CP$ is a map of posets ${B\colon \CP\to \NN}$ that essentially counts concrete circuits in the Hasse diagram of the poset, see Definition~\ref{labelling_function}. Figure~\ref{Intro/fig/poset_label} shows a labelled poset. This combinatorial function bounds the vanishing of the higher limits as follows:\smallskip

\noindent\textbf{\textcolor{brownred}{\cref{bound_by_B}.}}\,
\textit{
Let $\CP$ be a filtered poset, and ${B\colon \CP\to \NN}$ be its associated labelling function. Given a functor ${F\colon \CP^\op\to R\Mod}$, we have:
\[
H^k(\CP;F)=0
\]
for every $k>\sup B$.
}
\end{minipage}\medskip

We relate this labelling function to the measure of a weak poset introduced by Mitchell \cite{MR294454}. Mitchell's measure acts as an invariant that quantifies the combinatorial complexity of the poset's intervals, specifically capturing the presence of nested circuits and branching paths. The bound presented by Mitchell is sharper than the one presented here. However, it is not a bound for the height of the fibrant replacement construction.

The bound in \cref{bound_by_B} is particularly effective when applied to hypergraphs arising from network science. A \emph{network} models a system of entities (nodes) and their interactions (edges); when an interaction involves three or more participants simultaneously, the appropriate combinatorial framework is a hypergraph rather than a graph. The topological study of such networks has attracted considerable attention in the applied mathematics community, with sheaf-theoretic methods being employed, for example, in sensor integration \cite{Robinson2017}, opinion dynamics \cite{HansenGhrist2021}, and higher-order neural message-passing \cite{Bodnar2022NeuralSheafDiffusion,HumeLio2025}. In \cref{sect:combinatorial_bound} we
illustrate its effectiveness on two publicly available hypergraph datasets: a co-authorship hypergraph extracted from the \texttt{arXiv} metadata \cite{arxiv_kaggle_dataset}, where $\sup B = 2$ while the poset length equals $3$; and a plant-pollinator interaction hypergraph \cite{lucas_2024_13753744}, where $\sup B = 3$ versus a poset length of $4$. The computational code is available at GitHub \cite{Carrion-Github}.

The second vanishing bound focuses on the \emph{local algebraic information} of $F$ rather than on the geometry of the poset. A classical instance of this principle is the cohomology of a CW-complex: for each cell $p$ of $X$, the boundary $\partial p \cong S^{d(p)-1}$ has the property that $H^k(\partial p;\ZZ)=0$ for $k\ge d(p)$, and this purely local condition forces $H^k(X;\ZZ)=0$ for $k>\dim X$. The following result generalises this classical observation to arbitrary filtered posets and arbitrary functors: the local vanishing of $H^k(\CP_{<p};F)$ for every $p\in\CP$ controls the global vanishing over the entire poset:\smallskip

\textbf{\noindent\textcolor{brownred}{\cref{Thm-inductive1}.}}\,
\textit{Let $\CP$ be a filtered poset and $R$ be a unital commutative ring. Let $F\colon \CP^\op\to R\Mod$ be a functor. If there exists an integer $n\ge 0$ such that for every $p\in \CP$, $H^k(\CP_{< p};F)=0$ for all $k\ge n$, then:
\[
H^k(\CP;F)=0
\]
for all $k>n$.
}\medskip

A weaker version of this result (see \cref{Thm-inductive2}) yields vanishing bounds for Mackey functors with local quasi-units. This result does not require any shellability condition on the poset, unlike \cite[Theorem~5.3]{CD-shell}.\smallskip

\textbf{\noindent\textcolor{brownred}{\cref{thm:VB_inductive_weak_mackey}.}}\quad
\textit{
Let $\CP$ be a filtered poset of length $\ell$ and $R$ be a unital commutative ring. Let $m\le \ell$ be a non-negative integer. If $G\colon \CP^\op\to R\Mod$ is a weak Mackey functor that has a quasi-unit in $\CP_{\le p}$ for every $p\in \CP$ with $d(p)\le m$, then $H^k(\CP;G)=0$ for all $k>\ell-m$.
}
\medskip

We also show how \cref{Thm-inductive1} yields explicit vanishing bounds for the unnormalised Khovanov homology of a link. Everitt and Turner \cite{EverittKHov} showed that the Khovanov homology $\overline{KH}^i(L)$ of a link $L$ with $n$ crossings can be expressed as higher limits of a functor $F_{KH}$ over the Boolean poset $\{0,1\}^n$ of smoothings; thus \cref{Thm-inductive1} applies directly.  We illustrate this on the torus knot $T(3,4)$, also known as $8_{19}$.

Finally, the techniques presented in this paper for bounding higher limits via fibrant replacements dualise effortlessly to higher colimits via cofibrant replacements. By setting up a dual model category structure, we deduce an analogous theory governing the higher colimits.

\textbf{Outline of this paper:} This paper is organised as follows. In \cref{sect preliminars}, we introduce the notation we will use and briefly overview posets and homological algebra. In \cref{sect HoThe}, we establish the model category structure in which higher limits and colimits can be computed by explicit fibrant and cofibrant replacements.  In \cref{sect:combinatorial_bound} we introduce a combinatorial labelling function and prove the corresponding vanishing bound (\cref{bound_by_B}); we also relate it to Mitchell's measure (\cref{thm:delta<=B}) and illustrate its sharpness on two hypergraph datasets. Finally, \cref{sec:Inductive_bound} develops an inductive vanishing bound driven by the local acyclicity of the functor (\cref{Thm-inductive1}), with applications to Mackey functors (\cref{thm:VB_inductive_weak_mackey}) and to the Khovanov homology of the torus knot $T(3,4)$.

\textbf{Acknowledgments:} The author sincerely thanks Natàlia Castellana and Antonio Díaz for their invaluable guidance and useful comments.
The author also thanks O'Bryan Cárdenas-Andaur for his advice on drawing and visualizing knots, and Gonzalo Contreras Aso for fruitful conversations and for introducing the author to the software library used for hypergraph experimentation.

The author was supported by Universidad de Málaga grant G RYC-2010-05663, Comissionat per Universitats i Recerca de la Generalitat de Catalunya (grant No. 2021-SGR-01015), and MICINN grants PID2023-149804NB-I00, PID2020-118753GB-I00 and PID2020-116481GB-I00.
Part of this article is based upon the author's Ph.D. thesis, which was supported by the predoctoral grant BES-2017-079851.

%%%%%%%%%%%%%%%%%%%%%%%%%%%%%%%%%%%%%%%%%%%%%%%%%
%%%%%%%%%%%%%%%%%%%%%%%%%%%%%%%%%%%%%%%%%%%%%%%%%
%%%%%%%%%%%%%%%%%%%%%%%%%%%%%%%%%%%%%%%%%%%%%%%%%
%%%%%%%%%%%%%%%%%%%%%%%%%%%%%%%%%%%%%%%%%%%%%%%%%
\section{Preliminaries}
\label{sect preliminars}
%%%%%%%%%%%%%%%%%%%%%%%%%%%%%%%%%%%%%%%%%%%%%%%%%
%%%%%%%%%%%%%%%%%%%%%%%%%%%%%%%%%%%%%%%%%%%%%%%%%
%%%%%%%%%%%%%%%%%%%%%%%%%%%%%%%%%%%%%%%%%%%%%%%%%
%%%%%%%%%%%%%%%%%%%%%%%%%%%%%%%%%%%%%%%%%%%%%%%%%

The two subsections below collect the combinatorial and homological notions used 
throughout the paper: \cref{subsec:posets} establishes the language of filtered posets 
and their subposets, while \cref{subsec:HomAlg} introduces the mapping (co)cylinder 
constructions and their truncated variants, which are the key technical tools 
in the (co)fibrant replacement constructions of \cref{sect HoThe}.

%%%%%%%%%%%%%%%%%%%%%%%%%%%%%%%%%%%%%%%%%%%%%%%%%
\subsection{Posets}
\label{subsec:posets}
%%%%%%%%%%%%%%%%%%%%%%%%%%%%%%%%%%%%%%%%%%%%%%%%%
A poset ${\CP=(\CP,\le)}$ is considered a category whose objects are the elements $p\in \CP$ and a single arrow $p\to q$ exists if and only if $p\le q$. We abuse notation and will denote by $\CP$ either the poset as a set or the associated category. A poset $\CP$ is said to be filtered if there exists a (strictly) order-preserving map $\mathrm d\colon \CP\to \NN$. For the purpose of this paper, we can always assume that  $\mathrm d(p)$ equals the maximum of the length of chains that end in $p$,
\[
p_0<p_1<\dots<p_n=p.
\]
Thus, we define the length of $\CP$ to be the maximum of the length of any chain in $\CP$, i.e.,
\[
\operatorname{length}(\CP)=\max \{d(p)\mid p\in \CP\}.
\]
For a poset $\CP$ and an object $p\in \CP$ we denote by $\CP_{< p}$ the subposet  consisting of those $q\in \CP$ such that $q< p$, 
\[
\CP_{< p}=\{q\in \CP\mid q< p\}.
\]
Similarly, we denote by $\CP_{\le p}$, $\CP_{\ge p}$, and $\CP_{>p}$ the evident subposets.
The cover relation on $\CP$, denoted by $\prec$, is the relation on $\CP$ defined by: $p\prec q$ if and only if 
\[
p< q\mbox{ and }\forall r\in \CP, p\le r\le q\Rightarrow p=r\text{ or }q=r.
\]

A subset $Q$ of a poset $\CP$ is \emph{upper convex} if $\CP_{\ge x}\subset Q$ for every $x\in Q$. Dually, $Q$ is \emph{lower convex} if $\CP_{\le x}\subset Q$ for every $x\in Q$.  The \emph{Hasse diagram} of a poset $\CP$ is a directed graph whose vertices are the elements of $\CP$, and there is a directed edge between $x$ and $y\in \CP$ if and only if $x\prec y$. Throughout this manuscript, we draw these edges upward, so the partial order can be read visually from bottom to top.

%%%%%%%%%%%%%%%%%%%%%%%%%%%%%%%%%%%%%%%%%%%%%%%%%
%%%%%%%%%%%%%%%%%%%%%%%%%%%%%%%%%%%%%%%%%%%%%%%%%
%%%%%%%%%%%%%%%%%%%%%%%%%%%%%%%%%%%%%%%%%%%%%%%%%
\subsection{Homological algebra}
\label{subsec:HomAlg}
%%%%%%%%%%%%%%%%%%%%%%%%%%%%%%%%%%%%%%%%%%%%%%%%%
%%%%%%%%%%%%%%%%%%%%%%%%%%%%%%%%%%%%%%%%%%%%%%%%%
%%%%%%%%%%%%%%%%%%%%%%%%%%%%%%%%%%%%%%%%%%%%%%%%%

The technical tool in this paper is the mapping (co)cylinder construction and  its truncated variant, which allow us to factor certain morphisms of cochain complexes in a height-controlled way. Let $R$ be a unital commutative ring. We denote by $\Ch(R)$ the category of (unbounded) cochain complexes. For practical reasons, given a cochain complex $C$, we use the convention that $C_k=C^{-k}$.
%%%%%%%%%%%%%%%%%%%%%%%%%%%%%%%%%%%%%%%%%%%%%%%%%
\begin{defn}
\label{mapping_co_cyl}
%%%%%%%%%%%%%%%%%%%%%%%%%%%%%%%%%%%%%%%%%%%%%%%%%
Let $f\colon C\to D$ be a map between cochain complexes.
\begin{enumerate}
\item The \emph{mapping cocylinder} of $f$ is the cochain complex $\cocyl(f)$ defined by
\[
\cocyl(f)^k=C^k\times D^{k-1}\times D^k,
\]
in degree $k$ and the differential is given by the formula
\[
\partial(c,d,d')=(\partial_C c,d'-f(c)-\partial d,\partial_D d').
\]
\item The \emph{mapping cylinder} of $f$ is the cochain complex $\cyl(f)$ defined by
\[
\cyl(f)_k=C_k\times C_{k-1}\times D_k,
\]
in degree $k$ and the differential is given by the formula
\[
\partial(c,c',d)=(\partial_C c+c',-\partial_C c',\partial_D d-f(c')).
\]
\end{enumerate}
\end{defn}

%%%%%%%%%%%%%%%%%%%%%%%%%%%%%%%%%%%%%%%%%%%%%%%%%
\begin{rmk}
\label{fact_cyl_cocyl}
%%%%%%%%%%%%%%%%%%%%%%%%%%%%%%%%%%%%%%%%%%%%%%%%%
Given a map between cochain complexes $f\colon C\to D$, the mapping cylinder provides a factorisation of $f$ as a monomorphism followed by a quasi-isomorphism:
%%%%%%%%%%%%%%%%%%%%%%%%%%%
\begin{equation}
\label{eq:fact_cyl}
%%%%%%%%%%%%%%%%%%%%%%%%%%%
\begin{tikzcd}[row sep=tiny]
C\arrow[r,"i"]&\cyl(f)\arrow[r,"\pi"]& D\\
c\arrow[r,maps to] &(c,0,0)&\\
&(c,c',d)\arrow[r,maps to] &f(c)+d,
\end{tikzcd}
\end{equation}
and the mapping cocylinder factors $f$ as a quasi-isomorphism followed by an epimorphism:
\begin{equation}
\label{eq:fact_cocyl}
\begin{tikzcd}[row sep=tiny]
C\arrow[r,"i"]&\cocyl(f)\arrow[r,"\pi"]& D\\
c\arrow[r,maps to] &(c,0,f(c))&\\
&(c,d,d')\arrow[r,maps to] &d'.
\end{tikzcd}
\end{equation}
For more details see \cite[Section 1.5]{Weibel1994}.
\end{rmk}

\begin{defn}
Let $C$ be a bounded cochain complex and ${F\colon \CP\to \Ch(R)}$ be a functor from a small category $\CP$.
\begin{enumerate}
\item We define the \emph{height} of $C$, denoted by $\alt(C)$, to be the integer $n$ such that $C^k=0$ for all $k>n$ and $C^n\neq 0$. Similarly, we define the \emph{height} of $F$, which we will also denote $\alt(F)$, to be the supremum of the heights, i.e.,
\[
\alt(F)=\sup\{\alt(F(p))\mid p\in \CP\}.
\]
\item We define the \emph{depth} of $C$, denoted by $\depth(C)$, to be the integer $n$ such that $C_k=0$ for all $k>n$ and $C_n\neq 0$. Similarly, we define the \emph{depth} of $F$, which we will also denote $\depth(F)$, to be the supremum of the depths, i.e.,
\[
\depth(F)=\sup\{\depth(F(p))\mid p\in \CP\}.
\]
\end{enumerate}
\end{defn}
%%%%%%%%%%%%%%%%%%%%%%%%%%%%
\begin{prop}
\label{rmk_cyl+1}
%%%%%%%%%%%%%%%%%%%%%%%%%%%%
Let $f\colon C\to D$ be a morphism between cochain complexes. Assume that ${\alt(C)<\alt(D)}$ (resp. $\depth(C)>\depth(D)$). Then
\[
\alt(\cocyl(f))=\alt(D)+1\qquad \text{(resp. } \depth(\cyl(f))=\depth(C)+1\text{)}.\hfill\qed
\]
\end{prop}
\begin{defn}
Let $f\colon C\to D$ be a morphism between cochain complexes. The morphism $f\colon C\to D$ is said to be \emph{truncatable} if $n=\depth(C)>\depth(D)$, and the differential 
\[
\partial_C\colon {C_{n}\hookrightarrow C_{n-1}}
\]
is a monomorphism.
Dually, the morphism $f\colon C\to D$ is said to be \emph{cotruncatable} if $\alt(C)<\alt(D)=m$, and the differential 
\[
\partial_D\colon {D^{m-1}\twoheadrightarrow D^{m}}
\]
is an epimorphism.
\end{defn}

When $f$ is (co)truncatable, the following variant of the mapping (co)cylinder  provides a factorisation analogous to Remark~\ref{fact_cyl_cocyl} without increasing the depth (height) of the resulting complex.

Let $\NT_n\colon \Ch(R)\to \Ch(R)$ and $\NC^n\colon \Ch(R)\to \Ch(R)$ denote the $n$-truncation and $n$-cotruncation functor defined on a cochain complex $C$ by:

\begin{centering}
\begin{minipage}{0.6\linewidth}
\[
(\NT_n C)_k=\left\lbrace \begin{matrix}
C_k&\mbox{if } k\le n\\
0&\mbox{if } k>n,
\end{matrix}\right.
\]
\end{minipage}
\begin{minipage}{0.3\linewidth}
\[
(\NC^n C)^k=\left\lbrace \begin{matrix}
C^k&\mbox{if } k\le n\\
0&\mbox{if } k>n,
\end{matrix}\right.
\]
\end{minipage}
\end{centering}

\noindent with the obvious action on morphisms. In the case of truncating at $\depth(C)$ (resp. $\alt(C)$), we omit the index and write $\NT C$ (resp. $\NC C$).

%%%%%%%%%%%%%%%%%%%%%%%%%%%%%%%%%%%%%%%%%%%%%%%%%
\begin{defn}
\label{def:truncated_mapping_cocyl}
%%%%%%%%%%%%%%%%%%%%%%%%%%%%%%%%%%%%%%%%%%%%%%%%%
Let $f\colon C\to D$ be a morphism between cochain complexes.
\begin{enumerate}
\item If $f$ is cotruncatable we define the \emph{truncated mapping cocylinder} of $f$, denoted by $\cocyl_{\NT}(f)$, as
\[
\cocyl_{\NT}(f)=\cocyl\Big(C\overset{\NC(f)}\longrightarrow \NC D\Big).
\]
\item If $f$ is truncatable we define the \emph{truncated mapping cylinder} of $f$, denoted by $\cyl_{\NT}(f)$, as
\[
\cyl_{\NT}(f)=\cyl\Big(\NT C\overset{\NT(f)}\longrightarrow  D\Big).
\]
\end{enumerate}
\end{defn}

%%%%%%%%%%%%%%%%%%%%%%%%%%%%%%%%%%%%%%%%%%%%%%%%%
\begin{prop}
\label{fact_truncatedcocyl}
%%%%%%%%%%%%%%%%%%%%%%%%%%%%%%%%%%%%%%%%%%%%%%%%%
Let $f\colon C\to D$ be a morphism between cochain complexes. 
\begin{enumerate}
\item If $f$ is truncatable, then it factors through the truncated mapping cylinder as a monomorphism followed by a quasi-isomorphism,
\[
\begin{tikzcd}
C \arrow[r,hook] & \cyl_\NT(f) \arrow[r, "\sim"] & D.
\end{tikzcd}
\]
\item If $f$ is cotruncatable, then it factors through the truncated mapping cocylinder as a quasi-isomorphism followed by an epimorphism,
\[
\begin{tikzcd}
C \arrow[r, "\sim"] & \cocyl_{\NT}(f) \arrow[r, two heads] & D.
\end{tikzcd}
\]
\end{enumerate} 
\end{prop}
\begin{proof}
We only prove the cotruncatable case; the truncated case is similar. The inclusion $i\colon C\to \cocyl_{\NT}(f)$ is the one given in \eqref{eq:fact_cocyl}. The morphism {$\pi\colon \cocyl_\NT(f)\twoheadrightarrow D$} is given by:
\[
\pi^k=\left\lbrace \begin{matrix}
0 & \mbox{if } k>n+1\\
\partial_D:D^{n}\to D^{n+1}&\mbox{if }k=n+1\\
\pi_{D^k}:C^k\times D^{k-1}\times D^k\to  D^k &\mbox{if } k\le n,
\end{matrix}\right. 
\]
where $\pi_{D^k}$ is the projection and $\partial_D$ is the differential of $D$.
\end{proof}

%%%%%%%%%%%%%%%%%%%%%%%%%%%%%%%%%%%%%%%%%%%%%%%%%
%%%%%%%%%%%%%%%%%%%%%%%%%%%%%%%%%%%%%%%%%%%%%%%%%
%%%%%%%%%%%%%%%%%%%%%%%%%%%%%%%%%%%%%%%%%%%%%%%%%
%%%%%%%%%%%%%%%%%%%%%%%%%%%%%%%%%%%%%%%%%%%%%%%%%
\section{Homotopical algebra approach to higher limits}
\label{sect HoThe}
%%%%%%%%%%%%%%%%%%%%%%%%%%%%%%%%%%%%%%%%%%%%%%%%%
%%%%%%%%%%%%%%%%%%%%%%%%%%%%%%%%%%%%%%%%%%%%%%%%%
%%%%%%%%%%%%%%%%%%%%%%%%%%%%%%%%%%%%%%%%%%%%%%%%%
%%%%%%%%%%%%%%%%%%%%%%%%%%%%%%%%%%%%%%%%%%%%%%%%%
Given a filtered poset $\CP$ and a unital commutative ring $R$, the limit and colimit define left-exact and right-exact functors, respectively:
\[
\lim\colon \Fun(\CP^\op,R\Mod)\to R\Mod,\qquad
\colim\colon \Fun(\CP,R\Mod)\to R\Mod.
\]
The \emph{higher limits} of a contravariant functor $F\colon\CP^\op\to R\Mod$ are the right-derived functors of the limit,
\[
H^k(\CP;F):=\operatorname{R}^k\lim(F),
\]
and the \emph{higher colimits} of a covariant functor $F\colon\CP\to R\Mod$ are the left-derived functors of the colimit,
\[
H_k(\CP;F):=\operatorname{L}_k\colim(F).
\]

This section presents a uniform approach to the computation of both invariants by setting a model category structure in the category of functors. For background on model categories see \cite{Balchin2021, Dwyer1995} or Quillen's original work \cite{Quillen1967}. Throughout this paper, every functor $F\colon\CP^\op\to R\Mod$ (resp.\ $F\colon\CP\to R\Mod$) is regarded as an object of $\Fun(\CP^\op,\Ch(R))$ (resp.\ $\Fun(\CP,\Ch(R))$), with $F(p)$ concentrated in degree $0$ for every $p\in\CP$. We develop the theory in detail for higher limits and contravariant functors; each result is followed immediately by its dual statement for higher colimits and covariant functors, stated without proof since the arguments are entirely symmetric.

%%%%%%%%%%%%%%%%%%%%%%%%%%%%%%%%%%%%%%%%%%%%%%
\begin{prop}
\label{thm:model_category}
%%%%%%%%%%%%%%%%%%%%%%%%%%%%%%%%%%%%%%%%%%%%%%%%%
Let $\CP$ be a filtered poset, and $R$ be a unital commutative ring. There is a model category structure on the category of functors $\Fun(\CP^\op,\Ch(R))$ in which a natural transformation $\eta\colon F\Rightarrow G$ is a 
\ModelCategoryDefinition%
{weak equivalence if, for every object $p\in \CP$, the morphism 
$
\eta_p\colon F(p)\to G(p)
$
is a quasi-isomorphism, that is, it induces an isomorphism in cohomology;}%
{cofibration if, for every object $p\in \CP$, the morphism 
$
\eta_p\colon F(p)\to  G(p)
$
is a split monomorphism with cofibrant cokernel; and}%
{fibration if, for every object $p\in \CP$, the morphism
\[
F(p)\to G(p)\times_{{\displaystyle\slim_{\CP_{<p}}G}}\slim_{\CP_{<p}}F,
\]
is an epimorphism.
}%
\end{prop}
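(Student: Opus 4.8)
The plan is to recognise the asserted structure as the inverse Reedy model structure on $\Fun(\CP^\op,\Ch(R))$ associated with the model structure on $\Ch(R)$ whose weak equivalences are the quasi-isomorphisms, whose fibrations are the epimorphisms, and whose cofibrations are the split monomorphisms with cofibrant cokernel. Since $\CP$ is filtered, the strict order-preserving map $\mathrm d\colon\CP\to\NN$ makes $\CP$ a direct category and hence $\CP^\op$ an inverse category, graded by $\mathrm d$. This grading is what lets me replace the abstract axioms by a well-founded induction on $\mathrm d(p)$: every construction on a functor $X$ at an object $p$ will depend only on the values of $X$ on the strictly smaller poset $\CP_{<p}$.

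The first step is to record the axioms that are immediate from the pointwise nature of the definitions. The category $\Fun(\CP^\op,\Ch(R))$ is complete and cocomplete, with limits and colimits formed objectwise, giving the existence of (co)limits; two-out-of-three and closure under retracts for the weak equivalences and for the cofibrations hold because the corresponding classes in $\Ch(R)$ have these properties and the conditions are imposed objectwise. The key identification is that, for an inverse category, the latching object $L_pX$ is initial, so the relative latching map is simply $\eta_p\colon X(p)\to Y(p)$ and a cofibration is exactly a pointwise cofibration; dually, the matching object is $M_pX=\lim_{\CP_{<p}}X$, so the relative matching map is $X(p)\to Y(p)\times_{\lim_{\CP_{<p}}Y}\lim_{\CP_{<p}}X$ and a fibration is exactly the condition that this map be an epimorphism, as stated.

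The heart of the proof is the factorisation axiom, which I would establish by induction on $\mathrm d(p)$. Given $g\colon A\to B$, I build the intermediate functor $Z$ one degree at a time: assuming $Z$ and the factorisation $A\to Z\to B$ have been defined on $\CP_{<p}$, I must factor in $\Ch(R)$ the induced map $A(p)\to B(p)\times_{\lim_{\CP_{<p}}B}\lim_{\CP_{<p}}Z$, where the matching objects are computed from the part of $Z$ already constructed. For the (trivial cofibration, fibration) factorisation this is precisely the shape produced by \cref{factorisation_cocyl}, namely a quasi-isomorphism that is a split monomorphism followed by a split epimorphism (it remains to confirm that the cokernel of the inclusion is cofibrant, so that the first map is a genuine cofibration); when the relevant map is truncatable, \cref{fact_truncatedcocyl} gives the more economical version that will later yield the explicit fibrant replacements. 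Setting $Z(p)$ to be the resulting cochain complex, and defining the structure maps through the universal property of the pullback, produces a natural factorisation of $g$ with the required objectwise properties; the (cofibration, trivial fibration) factorisation is handled by the same inductive scheme using the dual factorisation in the base.

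Finally, the lifting axiom is reduced in the same way to lifting problems in $\Ch(R)$, solved by induction on degree: a commuting square of natural transformations is lifted objectwise by solving at each $p$ the lifting problem between the relative latching map (a cofibration of cochain complexes) and the relative matching map (a fibration of cochain complexes), one of them being a weak equivalence, and then checking that the objectwise lifts assemble into a natural transformation via the pullback description of the matching maps. The main obstacle I anticipate is precisely this inductive factorisation/lifting step: one has to verify that the base factorisations can be chosen compatibly with the restriction maps so that the assembled $Z$ is genuinely a functor, and that the matching-object pullbacks $B(p)\times_{\lim_{\CP_{<p}}B}\lim_{\CP_{<p}}Z$ interact correctly with the epimorphism and quasi-isomorphism conditions in $\Ch(R)$. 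The filteredness of $\CP$, which guarantees that the induction is well-founded and that each step involves only the combinatorics encoded by $\CP_{<p}$, is what makes this control possible.
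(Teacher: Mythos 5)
Your identification of the asserted structure as the Reedy (inverse) model structure on $\Fun(\CP^\op,\Ch(R))$ induced by the projective model structure on $\Ch(R)$, with trivial latching objects and matching objects $M_pX=\lim_{\CP_{<p}}X$, is exactly the paper's proof: the author makes $\CP$ a Reedy category with $\direct{\CP}=\CP$ and $\invers{\CP}$ discrete and then simply cites Kan's existence theorem \cite[Theorem 15.3.4]{Hirschhorn2003}. What you add is an unpacking of that theorem's proof (induction on $\mathrm d(p)$, objectwise factorisation and lifting against the relative matching maps), which is legitimate and, for an inverse category indexed by a filtered poset, goes through essentially as you describe.

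The one step that would fail as written is your use of the mapping cocylinder for the (trivial cofibration, fibration) factorisation axiom. The inclusion $i\colon C\to\cocyl(f)$ of \cref{factorisation_cocyl} is a split monomorphism and a quasi-isomorphism, but its cokernel is the contractible complex with $D^{k-1}\oplus D^{k}$ in degree $k$, which is cofibrant in the projective model structure only when it is degreewise projective; for a general $D$ it is not, so $i$ need not be a cofibration, and the caveat you flag is a genuine obstruction rather than a routine verification. The fix is standard: for the model-category axioms one should invoke the (functorial) factorisations that $\Ch(R)$ already possesses as a model category, applied inductively to the relative matching maps --- which is precisely what the cited theorem does. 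The mapping cocylinder and its truncated variant are only needed later, in \cref{def_fibrant_rep}, where one merely requires a weak equivalence followed by an epimorphism (not a trivial cofibration followed by a fibration), and there they are perfectly adequate.
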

\begin{proof}
We consider $\CP$ as a Reedy category, see \cite[Section 15]{Hirschhorn2003} for background, by setting $\direct\CP=\CP$ and $\invers{\CP}$ equal to the discrete category spanned by the objects of $\CP$. We endow $\Ch(R)$ with the projective model category structure, see \cite[Subsection 18.4]{May2012}. Then, the proof follows by applying the result of D. M. Kan \cite[Theorem 15.3.4]{Hirschhorn2003}.
\end{proof}
%%%%%%%%%%%%%%%%%%%%%%%%%%%%%%%%%%%%%%%%%%%%%%%%%
\begin{cor}
\label{functor_fibrante}
%%%%%%%%%%%%%%%%%%%%%%%%%%%%%%%%%%%%%%%%%%%%%%%%%
Let $\CP$ be a filtered poset, and $R$ be a unital commutative ring. A functor ${F\colon \CP^\op\to \Ch(R)}$ is fibrant if and only if, for every $p\in \CP$, the natural map
%%%%%%%%%%%%%%%%%%%%%%%%%%%%%%%%%%%%%%%%%%%%%%%%%
\begin{equation}
\label{condicion_fibrante}
%%%%%%%%%%%%%%%%%%%%%%%%%%%%%%%%%%%%%%%%%%%%%%%%%
F(p)\to \slim_{\CP_{<p}}F
\end{equation}
is an epimorphism.\qed
\end{cor}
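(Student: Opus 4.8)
The plan is to read the statement off the fibration criterion of \cref{thm:model_category}, using the standard fact that an object $F$ of a model category is fibrant exactly when the unique morphism from $F$ to the terminal object is a fibration. So the first step is to identify the terminal object of $\Fun(\CP^\op,\Ch(R))$. Since limits in a functor category are computed objectwise and the zero complex $0$ is the terminal (indeed the zero) object of $\Ch(R)$, the terminal functor is the constant functor $\mathbf 0$ with value $0$.

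Next I would substitute $Y=\mathbf 0$ and $X=F$ into the fibration condition. For every $p\in\CP$ one has $\mathbf 0(p)=0$ and, again computing objectwise, $\lim_{\CP_{<p}}\mathbf 0=0$. Hence the target of the comparison morphism simplifies:
\[
\mathbf 0(p)\times_{\lim_{\CP_{<p}}\mathbf 0}\lim_{\CP_{<p}}F \;=\; 0\times_{0}\lim_{\CP_{<p}}F\;\cong\;\lim_{\CP_{<p}}F,
\]
because a fibre product over a terminal object is a product and the product with the zero object is the remaining factor. Under this identification the comparison morphism of \cref{thm:model_category} becomes exactly the canonical map $F(p)\to\lim_{\CP_{<p}}F$ induced by the cone of structure morphisms $F(p)\to F(q)$ for $q<p$. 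Combining these observations, $F\to\mathbf 0$ is a fibration if and only if, for every $p\in\CP$, this canonical map is an epimorphism, which is precisely condition \eqref{condicion_fibrante}.

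There is no deep obstacle here, as all the substance is already contained in \cref{thm:model_category}; the corollary is a direct specialisation. The only points that require care are purely bookkeeping: verifying that the terminal functor and the relevant limits are formed objectwise, and checking that the collapse of the fibre product over the zero object to $\lim_{\CP_{<p}}F$ is compatible with the comparison morphism, so that the epimorphism condition is literally the stated natural map $F(p)\to\lim_{\CP_{<p}}F$ and not merely a map isomorphic to it.
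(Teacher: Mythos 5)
Your proposal is correct and matches the paper's intent exactly: the paper states \cref{functor_fibrante} with an immediate \qed, treating it as the direct specialisation of the fibration condition in \cref{thm:model_category} to the map $F\to\mathbf 0$, which is precisely the bookkeeping you carry out. Nothing is missing.
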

%%%%%%%%%%%%%%%%%%%%%%%%%%%%%%%%%%%%%%%%%%%%%%
\begin{prop}
\label{thm:dual_model_category}
%%%%%%%%%%%%%%%%%%%%%%%%%%%%%%%%%%%%%%%%%%%%%%%%%
Let $\CP$ be a filtered poset, and $R$ be a unital commutative ring. Then there is a model category structure on the category of functors $\Fun(\CP, \Ch(R))$ in which a natural transformation $\eta\colon F\Rightarrow G$ is a
\ModelCategoryDefinition%
{weak equivalence if, for every object $p\in \CP$, the morphism $\eta_p\colon F(p)\to G(p)$ induces an isomorphism in homology;}%
{cofibration if, for every object $p\in \CP$, the morphism 
\[
F(p)\bigsqcup_{\displaystyle\scolim_{\CP_{<p}}F}\scolim_{\CP_{<p}}G\to  G(p),
\]
is a monomorphism; and}%
{fibration if, for every object $p\in \CP$, the  morphism $F(p)\to G(p)$ is a split epimorphism with fibrant kernel.
}%
\end{prop}
\begin{cor}
Let $\CP$ be a filtered poset, and $R$ be a unital commutative ring. A functor ${F\colon \CP\to \Ch(R)}$ is cofibrant if and only if, for every $p\in \CP$, the natural map
%%%%%%%%%%%%%%%%%%%%%%%%%%%%%%%%%%%%%%%%%%%%%%%%%
\begin{equation}
\label{condicion_cofibrante}
%%%%%%%%%%%%%%%%%%%%%%%%%%%%%%%%%%%%%%%%%%%%%%%%%
\scolim_{\CP_{<p}}F\to F(p)
\end{equation}
is a monomorphism.\qed
\end{cor}

%%%%%%%%%%%%%%%%%%%%%%%%%%%%%%%%%%%%%%%%%%%%%%%%%
\begin{exm}
\label{ex:poset_initial_object}
%%%%%%%%%%%%%%%%%%%%%%%%%%%%%%%%%%%%%%%%%%%%%%%%%
Let $\CP$ be a filtered poset with an initial object, and $M$ be an $R$-module. Then, the constant contravariant functor ${\underline M\colon \CP^{\op}\to R\Mod}$ is fibrant. Dually, the constant covariant functor ${\underline M\colon \CP\to R\Mod}$ is cofibrant.
\end{exm}

%%%%%%%%%%%%%%%%%%%%%%%%%%%%%%%%%%%%%%%%%%%%%%%%%
\begin{exm}
\label{ex:mackey}
%%%%%%%%%%%%%%%%%%%%%%%%%%%%%%%%%%%%%%%%%%%%%%%%%
Let $K$ be a simplicial set and $\CP$ denote its face poset. Let 
\[
(I,D)\colon \CP(K)\to R\Mod
\]
be a pair of twin functors. Then $I$ is cofibrant and $D$ is fibrant; see  \cite[Definition 2.6]{NotbohmRay_DJ}.
More generally, a contravariant (resp. covariant) weak Mackey functor with quasi-unit is fibrant (resp. cofibrant); see \cref{sec:weak_mackey_functors} for definitions and \cite{CD23} for background.
\end{exm}

%%%%%%%%%%%%%%%%%%%%%%%%%%%%%%%%%%%%%%%%%%%%%%
\begin{thm}
\label{hig_lim_fibrant_rep}
%%%%%%%%%%%%%%%%%%%%%%%%%%%%%%%%%%%%%%%%%%%%%%
Let $\CP$ be a filtered poset and $R$ be a unital commutative ring. 
\begin{enumerate}
\item Given a functor $F\colon \CP^\op\to R\Mod$, we have
\[
H^*(\CP;F)=H^*(\lim\NR F),
\]
where $\NR F\colon \CP^\op\to \Ch(R)$ is a fibrant replacement of $F$ in the model category structure defined in Proposition~\ref{thm:model_category}.
\item Given a functor $F\colon \CP\to R\Mod$, we have
\[
H_*(\CP;F)=H_*(\colim\NQ F),
\]
where $\NQ F\colon \CP\to \Ch(R)$ is a cofibrant replacement of $F$ in the model category structure defined in Proposition~\ref{thm:dual_model_category}.
\end{enumerate}
\end{thm}
\begin{proof}
We only prove (1); the proof of (2) is analogous. Weak equivalences and cofibrations in the category of functors $\Fun(\CP^\op,\Ch(R))$ are object-wise weak equivalences and cofibrations in the category of cochain complexes $\Ch(R)$. Therefore, the adjoint pair $\Delta \colon \Ch(R)\Longleftrightarrow \Fun(\CP^\op,\Ch(R))\colon \lim$, where $\Delta$ is the diagonal functor, is a Quillen pair, which implies the theorem (see \cite[Section 9]{Dwyer1995}).
\end{proof}

A direct consequence of this approach is that we can compute vanishing bounds for the higher limits directly from the height of any fibrant replacement.

%%%%%%%%%%%%%%%%%%%%%%%%%%%%%%%%%%%%%%%%%%%%%%%%%
\begin{cor}
\label{bound_from_height}
%%%%%%%%%%%%%%%%%%%%%%%%%%%%%%%%%%%%%%%%%%%%%%%%%
Let $\CP$ be a filtered poset, and $R$ be a unital commutative ring.
\begin{enumerate}
\item Let ${F\colon \CP^\op\to R\Mod}$ be a functor. If $\NR F\colon \CP^\op\to \Ch(R)$ is a fibrant replacement of $F$ such that $\alt(\NR F)=n$, then
\[
H^k(\CP;F)=0
\]
for every $k>n$. In particular, if $F$ is fibrant, then $F$ is $\lim$-acyclic, that is, ${H^k(\CP;F)=0}$ for $k>0$.
\item Let ${F\colon \CP\to R\Mod}$ be a functor. If $\NQ F\colon \CP\to \Ch(R)$ is a cofibrant replacement of $F$ such that $\depth(\NQ F)=n$, then
\[
H_k(\CP;F)=0
\]
for every $k>n$. In particular, if $F$ is cofibrant, then $F$ is $\colim$-acyclic, that is, ${H_k(\CP;F)=0}$ for $k>0$.\qed
\end{enumerate}
\end{cor}

\begin{exm}
The functors described in Example~\ref{ex:poset_initial_object} and Example~\ref{ex:mackey} are $\lim$-acyclic (resp.\ $\colim$-acyclic).
\end{exm}

\subsection{Fibrant replacement construction}
The idea underlying the preceding results is that higher limits can be computed through a fibrant replacement rather than an injective resolution. Thus, we now devote this subsection to describing a systematic construction of a fibrant replacement for a given functor. 

The Reedy structure on $\CP$ allows us to follow an inductive strategy to construct a fibrant replacement of a functor $F\colon \CP^\op\to R\Mod$; see \cite[Section 15.2]{Hirschhorn2003}.
We start by defining
\[
\NR F(p)=F(p)
\] 
for every object $p$ of degree $0$. Next, assume that $\NR F$ is already defined in the full subcategory of objects of degree less than $n$. To define $\NR F$ on $p\in \CP$ of degree $n$, we need to choose a factorisation of the composite $\varepsilon_p\colon F(p)\to \lim_{\CP_{<p}}F\to \lim_{\CP_{<p}}\NR F$ as a quasi-isomorphism followed by an epimorphism. In other words, we must choose an object $\NR F(p)$ together with a quasi-isomorphism $F(p)\to \NR F(p)$ and an epimorphism $\NR F(p)\to \lim_{\CP_{<p}}\NR F$ such that the following diagram commutes:
\begin{equation}\label{fibrant_diagram}
\begin{tikzcd}
F(p) \arrow[r] \arrow[d, "\sim", dashed] & \lim_{\CP_{<p}}F \arrow[d] \\
\NR F(p) \arrow[r, two heads, dashed]    & \lim_{\CP_{<p}}\NR F.  
\end{tikzcd} 
\end{equation}

Our desired fibrant replacement should coincide with the original functor on the objects that satisfy the fibrantness condition \eqref{condicion_fibrante} given in the following definition.

%%%%%%%%%%%%%%%%%%%%%%%%%%%%%%%%%%%%%%%%%%%%%%%%%
\begin{defn}
%%%%%%%%%%%%%%%%%%%%%%%%%%%%%%%%%%%%%%%%%%%%%%%%%
Let $\CP$ be a filtered poset, and $R$ be a unital commutative ring. A functor $F\colon\CP^\op\to R\Mod$ is said to be \emph{locally fibrant} at $p\in \CP$ if, for every $q\in \CP$ with $q\le p$, the matching morphism:
\[
F(q)\to \slim_{\CP_<q}F
\]
is an epimorphism.
\end{defn}
\begin{rmk}
If $Q$ is a lower convex subposet of $\CP$ and $F\colon \CP^\op\to R\Mod$ is a functor such that $F$ is locally fibrant at $q$ for every $q\in Q$, then $F\vert_Q$ is fibrant; therefore, $H^k(Q;F\vert_Q)=0$ for $k>0$.
\end{rmk}

Now, we define a fibrant replacement of a given functor by providing the factorisation described in \eqref{fibrant_diagram}.

%%%%%%%%%%%%%%%%%%%%%%%%%%%%%%%%%%%%%%%%%%%%%%%%%
\begin{defn}
\label{def_fibrant_rep}
%%%%%%%%%%%%%%%%%%%%%%%%%%%%%%%%%%%%%%%%%%%%%%%%%
Let $\CP$ be a filtered poset and $R$ be a unital commutative ring. Let \mbox{$F\colon \CP^\op\to R\Mod$} be a functor. We denote by $\NR F\colon \CP^\op\to \Ch(R)$ the functor defined by the following factorisation of the composite $\varepsilon_p\colon F(p)\to \lim_{\CP_{<p}}\NR F$:
\begin{enumerate}
\item if $F$ is locally fibrant at $p$, $\NR F(p)=F(p)$ with the trivial factorisation;
\item if $\varepsilon_p$ is cotruncatable, $\NR F(p)=\cocyl_{\NT}(\varepsilon_p)$ and the factorisation is provided by Proposition~\ref{fact_truncatedcocyl}; and
\item otherwise, $\NR F(p)=\cocyl(\varepsilon_p)$ with the natural factorisation of the mapping cocylinder, see \eqref{eq:fact_cocyl}.
\end{enumerate}
\end{defn}
%%%%%%%%%%%%%%%%%%%%%%%%%%%%%%%%%%%%%%%%%%%%%%%%%
\begin{rmk}
\label{alt_pos_no_loc_inj}
%%%%%%%%%%%%%%%%%%%%%%%%%%%%%%%%%%%%%%%%%%%%%%%%%
Notice that the cotruncatability of the morphism $\varepsilon_p\colon F(p)\to \lim_{\CP_{<p}}\NR F$ implies that $\alt(\lim_{\CP_{<p}}\NR F)>0$. Thus, if $\varepsilon_p$ is cotruncatable, then $F$ cannot be locally fibrant at $p$.
\end{rmk}
\begin{prop}
Let $\CP$ be a filtered poset, $R$ be a unital commutative ring and \mbox{$F\colon \CP^\op\to R\Mod$} be a functor. The functor $\NR F\colon \CP^\op\to \Ch(R)$ is a fibrant replacement for $F$ in the model category structure presented in Proposition~\ref{thm:model_category}.
\end{prop}
\begin{proof}
By construction, the natural transformation $F\Rightarrow \NR F$ is a weak equivalence in the model category structure presented in Proposition~\ref{thm:model_category}, and the fact that $\NR F$ is fibrant is deduced directly by \eqref{eq:fact_cocyl} and Proposition~\ref{fact_truncatedcocyl}.
\end{proof}
%%%%%%%%%%%%%%%%%%%%%%%%%%%%%%%%%%%%%%%%%%%%%%%%%
\begin{rmk}
\label{rmk:fib-choice-cocyl}
%%%%%%%%%%%%%%%%%%%%%%%%%%%%%%%%%%%%%%%%%%%%%%%%%
The fibrant replacement $\NR F$ from Definition~\ref{def_fibrant_rep} systematically uses the truncated mapping cocylinder at every object $p$ such that the respective morphism $\varepsilon_p$ is cotruncatable. However, the construction admits mixed variants: one may choose, at any specific object $p\in\CP$, to use the full mapping cocylinder $\cocyl(\varepsilon_p)$ instead, even when $\varepsilon_p$ is cotruncatable. Such a mixed choice still yields a valid fibrant replacement of $F$, but the height at those objects where the full cocylinder is used may be larger, potentially weakening the resulting vanishing bound. Nevertheless, there are structured situations in which deliberately using the full mapping cocylinder at certain objects is already optimal, for an example see \cite{CD-shell}.
\end{rmk}
%%%%%%%%%%%%%%%%%%%%%%%%%%%%%%%%%%%%%%%%%%%%%%%%%
\begin{defn}
\label{F-trunct}
%%%%%%%%%%%%%%%%%%%%%%%%%%%%%%%%%%%%%%%%%%%%%%%%%
Let $F\colon \CP^\op\to R\Mod$ be a functor, and ${\NR F\colon \CP^\op\to \Ch(R)}$ be its fibrant replacement as in Definition~\ref{def_fibrant_rep}. We say that $p\in \CP$ is \emph{$F$-truncatable} if the composite
\[
F(p)\to \slim_{\CP_{<p}} F\to \slim_{\CP_{<p}} \NR F
\]
is cotruncatable, i.e., $n=\alt(\slim_{\CP_{<p}} \NR F)>0$ and the last non-zero differential
\[
{\slim_{\CP_{<p}} \NR F^{n-1}\to \slim_{\CP_{<p}} \NR F^{n}}
\]
is an epimorphism.
\end{defn}
%%%%%%%%%%%%%%%%%%%%%%%%%%%%%%%%%%%%%%%%%%%%%%%%%
\begin{rmk}\label{rmk:alt_NRF}
%%%%%%%%%%%%%%%%%%%%%%%%%%%%%%%%%%%%%%%%%%%%%%%%%
Given a filtered poset $\CP$ and a functor \mbox{$F\colon \CP^\op\to R\Mod$}, by Remark~\ref{rmk_cyl+1}, we have that 
\[
\alt(\NR F)\le \max\{d(p)\mid p\in \CP\}.
\]
\end{rmk}

We easily deduce the classical vanishing bound for the higher limits given by the length of the maximal chain in $\CP$.

%%%%%%%%%%%%%%%%%%%%%%%%%%%%%%%%%%%%%%%%%%%%%%%%%
\begin{cor}
\label{vanishingbound_deep}
%%%%%%%%%%%%%%%%%%%%%%%%%%%%%%%%%%%%%%%%%%%%%%%%%
Let $\CP$ be a filtered poset, $R$ be a unital commutative ring.
Then, for any functor $F\colon \CP^\op\to R\Mod$, we have that
\[H^k(\CP;F)=0\]
for every $k>\operatorname{length}(\CP)$.
\end{cor}
\begin{proof}
This holds by Remark~\ref{rmk:alt_NRF} and Corollary~\ref{bound_from_height}.
\end{proof}
We conclude this section with the next example.
\begin{exm}
\label{example_N}
%%%%%%%%%%%%%%%%%%%%%%%%%%%%%%%%%%%%%%%%%%%%%%%%%
Let $(\NN,\le)$ be the poset of natural numbers with the trivial filtration ${\mathrm{id}_\NN\colon \NN\to \NN}$, and $F\colon \NN^\op\to R\Mod$ be a functor.  Assume that ${F(0<1)}$ is not an epimorphism. Then, $\NR F$ is defined on objects by: 
\[\NR F(0)=F(0);\qquad \NR F(1)= \cocyl(F(0<1));\]
and, for $n>1$
\[
\NR F(n)^k=\begin{cases}
0&\mbox{if }k\neq 0,1;\\
\prod_{i<n}F(i) &\mbox{if }k=1;\\
\prod_{i\le n} F(i)& \mbox{if }k=0.
\end{cases}
\]
The differential is given by the formula:
\[
\partial(x_0,x_1,\dots,x_n)_i=\left(x_i-\sum_{i<n}F(i<n)(x_n)\right).
\]
Then, by \cref{hig_lim_fibrant_rep}, we have
\[
H^k(\NN; F)\cong\begin{cases}
0&\mbox{if }k>1,\\
\coker \overline\partial&\mbox{if }k=1,\\
\ker \overline\partial\cong \lim_\NN F&\mbox{if }k=0;
\end{cases}
\]
where $\overline \partial \colon \lim \NR F^0\to \lim \NR F^1$ is induced by the object-wise differential.
\end{exm}

\subsection{Cofibrant replacement construction}
Here, we present dual versions of the constructions in the previous section. We do not include proofs since the arguments are identical to the fibrant replacement.

As before, we follow an inductive strategy, and we define 
\[
\NQ F(p)=F(p)
\] 
for every object $p$ of degree $0$. The inductive construction consists of factorising, for every $p\in \CP$ of positive degree, the composite morphism: 
\[
\scolim_{\CP_{<p}}\NQ F\to \scolim_{\CP_{<p}} F\to F(p)
\]
as a monomorphism followed by a quasi-isomorphism, as shown in the following diagram
\begin{equation}\label{cofibrant_diagram}
\begin{tikzcd}
\colim_{\CP_{<p}}\NQ F \arrow[d] \arrow[r, dashed, hook] & \NQ F(p) \arrow[d, "\sim", dashed] \\
\colim_{\CP_{<p}} F \arrow[r]                            & F(p).                      
\end{tikzcd}
\end{equation}
We will construct the cofibrant replacement following the same structure as in the fibrant replacement but using the dual notions.

%%%%%%%%%%%%%%%%%%%%%%%%%%%%%%%%%%%%%%%%%%%%%%%%%
\begin{defn}
%%%%%%%%%%%%%%%%%%%%%%%%%%%%%%%%%%%%%%%%%%%%%%%%%
Let $\CP$ be a filtered poset, and $R$ be a unital commutative ring. A functor $F\colon\CP\to R\Mod$ is said to be \emph{locally cofibrant} at $p\in \CP$ if, for every $q\in \CP$ such that $q\le p$, the morphism:
\[
\scolim_{\CP_<q} F\to F(q)
\]
is a monomorphism. 
\end{defn}
\begin{rmk}
Notice that if $Q$ is a lower convex subposet of $\CP$ and ${F\colon \CP\to R\Mod}$ is a functor such that $F$ is locally cofibrant at $q$ for every $q\in Q$, then $F\vert_Q$ is cofibrant; therefore, $H_k(Q;F\vert_Q)=0$ for $k>0$.
\end{rmk}

As before, we define a cofibrant replacement of a given functor by providing the factorisation described in \eqref{cofibrant_diagram}.

%%%%%%%%%%%%%%%%%%%%%%%%%%%%%%%%%%%%%%%%%%%%%%%%%
\begin{defn}
\label{def_cofibrant_rep}
%%%%%%%%%%%%%%%%%%%%%%%%%%%%%%%%%%%%%%%%%%%%%%%%%
Let $\CP$ be a filtered poset and $R$ be a unital commutative ring. Let \mbox{$F\colon \CP\to R\Mod$} be a functor. We denote by $\NQ F\colon \CP\to \Ch(R)$ the functor defined by the following factorisation of the composite $\varepsilon_p\colon \colim_{\CP_{<p}}\NQ F\to F(p)$:
\begin{enumerate}
\item if $F$ is locally cofibrant at $p$, $\NQ F(p)=F(p)$ with the trivial factorisation;
\item if $\varepsilon_p$ is truncatable, $\NQ F(p)=\cyl_\NT(\varepsilon_p)$ and the factorisation is provided by Proposition~\ref{fact_truncatedcocyl}; and
\item otherwise, $\NQ F(p)=\cyl(\varepsilon_p)$ with the natural factorisation of the mapping cylinder, see \eqref{eq:fact_cyl}.
\end{enumerate}
\end{defn}
\begin{prop}
Let $\CP$ be a filtered poset, $R$ be a unital commutative ring and \mbox{$F\colon \CP\to R\Mod$} be a functor. The functor $\NQ F\colon \CP\to \Ch(R)$ is a cofibrant replacement for $F$ in the model category structure presented in Proposition~\ref{thm:dual_model_category}.\qed
\end{prop}
%%%%%%%%%%%%%%%%%%%%%%%%%%%%%%%%%%%%%%%%%%%%%%%%%
\begin{defn}
%%%%%%%%%%%%%%%%%%%%%%%%%%%%%%%%%%%%%%%%%%%%%%%%%
Let $F\colon \CP\to R\Mod$ be a functor, and ${\NQ F\colon \CP\to \Ch(R)}$ be its cofibrant replacement as in Definition~\ref{def_cofibrant_rep}. We say that $p\in \CP$ is \emph{$F$-truncatable} if the composite
\[
\scolim_{\CP_{<p}}\NQ F \to \scolim_{\CP_{<p}}F\to F(p)
\]
is truncatable, i.e., $n=\depth(\colim_{\CP_{<p}} \NQ F)>0$ and the last non-zero differential
\[
{\scolim_{\CP_{<p}} \NQ F^{n}\to \scolim_{\CP_{<p}} \NQ F^{n-1}}
\]
is a monomorphism.
\end{defn}
%%%%%%%%%%%%%%%%%%%%%%%%%%%%%%%%%%%%%%%%%%%%%%%%%
\begin{rmk}
%%%%%%%%%%%%%%%%%%%%%%%%%%%%%%%%%%%%%%%%%%%%%%%%%
Given a filtered poset $\CP$ and a functor \mbox{$F\colon \CP\to R\Mod$}, by Remark~\ref{rmk_cyl+1}, we have that 
\[
\depth(\NQ F)\le \max\{d(p)\mid p\in \CP\}.
\]
\end{rmk}
\begin{cor}
Let $\CP$ be a filtered poset and $R$ be a unital commutative ring. For any functor $F\colon \CP\to R\Mod$, we have that $H_k(\CP;F)=0$ for every $k>\operatorname{length}(\CP)$.
\end{cor}

%%%%%%%%%%%%%%%%%%%%%%%%%%%%%%%%%%%%%%%%%%%%%%%%%
%%%%%%%%%%%%%%%%%%%%%%%%%%%%%%%%%%%%%%%%%%%%%%%%%
%%%%%%%%%%%%%%%%%%%%%%%%%%%%%%%%%%%%%%%%%%%%%%%%%
%%%%%%%%%%%%%%%%%%%%%%%%%%%%%%%%%%%%%%%%%%%%%%%%%
\section{Combinatorial bound for the vanishing of the higher limits}
\label{sect:combinatorial_bound}
%%%%%%%%%%%%%%%%%%%%%%%%%%%%%%%%%%%%%%%%%%%%%%%%%
%%%%%%%%%%%%%%%%%%%%%%%%%%%%%%%%%%%%%%%%%%%%%%%%%
%%%%%%%%%%%%%%%%%%%%%%%%%%%%%%%%%%%%%%%%%%%%%%%%%
%%%%%%%%%%%%%%%%%%%%%%%%%%%%%%%%%%%%%%%%%%%%%%%%%
As Example~\ref{example_N} illustrates, the geometry and combinatorics of the poset can significantly influence the vanishing for higher (co)limits — just as the directedness of a poset governs vanishing for colimits; see \cite[Theorem~2.6.15]{Weibel1994}. In particular, the bound provided by Corollary~\ref{vanishingbound_deep} is far from being sharp in general, especially for combinatorially rich posets.
In this section, we introduce a combinatorial labelling function on $\CP$ that controls the height of the (co)fibrant replacement and thereby induces a vanishing bound for the higher (co)limits.

%%%%%%%%%%%%%%%%%%%%%%%%%%%%%%%%%%%%%%%%%%%%%%%%%
\begin{defn}
%%%%%%%%%%%%%%%%%%%%%%%%%%%%%%%%%%%%%%%%%%%%%%%%%
Let $\CP$ be a filtered poset, $p\in \CP$ and $S$ be a subposet of $\CP_{<p}$. 
We say that $p$ \emph{closes a circuit in $S$} if a connected component of $S$ has at least two maximal objects.
\end{defn}
From a combinatorial point of view, if ``$p$ closes a circuit in $S$'', then the Hasse diagram of $S\cup\{p\}$ contains an undirected circuit passing through $p$.

%%%%%%%%%%%%%%%%%%%%%%%%%%%%%%%%%%%%%%%%%%%%%%%%%
\begin{figure}[h!]
%%%%%%%%%%%%%%%%%%%%%%%%%%%%%%%%%%%%%%%%%%%%%%%%%
\centering
\begin{tikzpicture}[
% Estilos para simplificar el código
dot/.style={circle, draw=darkslate, fill=darkslate!20, inner sep=2pt, thick},
connection/.style={draw=darkslate, thin},
regiona/.style={draw=honey, fill=honey!20, rounded corners=10pt, line width=2pt, opacity=0.8},
regionb/.style={draw=brownred, fill=brownred!20, rounded corners=10pt, line width=2pt, opacity=0.8}
]

\begin{scope}[shift={(0,0)}]
\filldraw[regiona] (-0.2, 1.7) rectangle (2.2, 3.3);
\node[text=honey, anchor=north west] at (-0.2, 3.3) {$S$};

\node[dot] (p1) at (1, 4) {}; \node[anchor=south] at (p1.north) {$p$};
\node[dot] (a1) at (0.5, 3) {}; \node[dot] (b1) at (1.5, 3) {};
\node[dot] (c1) at (0, 2) {};   \node[dot] (d1) at (1, 2) {};   \node[dot] (e1) at (2, 2) {};
\node[dot] (f1) at (0.5, 1) {}; \node[dot] (g1) at (1.5, 1) {};
\path[connection] (p1) edge (a1) edge (b1);
\path[connection] (a1) edge (c1) edge (d1);
\path[connection] (b1) edge (d1) edge (e1);
\path[connection] (c1) edge (f1);
\path[connection] (d1) edge (f1) edge (g1);
\path[connection] (e1) edge (g1);
\end{scope}

\begin{scope}[shift={(4,0)}]
\filldraw[regionb] (0.2, 0.8) rectangle (1.8, 3.5);
\node[text=brownred, anchor=north west] at (0.2, 3.5) {$S'$};

\node[dot] (p2) at (1.5, 4) {}; \node[anchor=south] at (p2.north) {$p$};
\node[dot] (a2) at (1, 3) {};   \node[dot] (b2) at (2, 3) {};
\node[dot] (c2) at (0.5, 2) {}; \node[dot] (d2) at (1.5, 2) {}; \node[dot] (e2) at (2.5, 2) {};
\node[dot] (f2) at (1, 1) {};   \node[dot] (g2) at (2, 1) {};
\path[connection] (p2) edge (a2) edge (b2);
\path[connection] (a2) edge (c2) edge (d2);
\path[connection] (b2) edge (d2) edge (e2);
\path[connection] (c2) edge (f2);
\path[connection] (d2) edge (f2) edge (g2);
\path[connection] (e2) edge (g2);
\end{scope}
\end{tikzpicture}
\caption{$p$ closes a circuit in $S$ but not in $S'$.
}
\end{figure}
%%%%%%%%%%%%%%%%%%%%%%%%%%%%%%%%%%%%%%%%%%%%%%%%%
\begin{defn}
\label{labelling_function}
%%%%%%%%%%%%%%%%%%%%%%%%%%%%%%%%%%%%%%%%%%%%%%%%%
Let $\CP$ be a filtered poset. We define the \emph{labelling function} of $\CP$ to be the map $B\colon \CP\to \NN$ defined inductively as follows: we assign the value $0$ to objects of degree $0$, and $1$ to objects of degree $1$.
Given $p\in \CP$ of degree $n$, let
\[
m=\max\{B(q)\mid q< p\},\quad\mbox{and}\quad 
\CB_p=\{s\in \CP_{<p}\mid m-1\le B(s)\le m\}.
\]
We define the label at $p$ by the following rule:
\[B(p)=\begin{cases}
m+1 &\mbox{if }p\mbox{ closes a circuit in }\CB_p,\\
m&\mbox{otherwise.}    
\end{cases}
\]
\end{defn}

\begin{figure}[h!]
\centering
\begin{tikzpicture}[
poset_node/.style={circle, draw=honey, fill=honey!50, ultra thick, inner sep=2pt, font=\footnotesize\bfseries, minimum size=6mm},
poset_node_label/.style={circle, draw=brownred, fill=brownred!20, ultra thick, inner sep=2pt, font=\footnotesize\bfseries, minimum size=6mm},
connection/.style={draw=darkslate!60, thick}
]

\begin{scope}
\node[poset_node] (p0) at (0, 0) {$p_0$};
\node[poset_node] (p00) at (2, 0) {$p_1$};

\node[poset_node] (p3) at (0, 1.2) {$p_4$};
\node[poset_node] (p2) at (1, 1.2) {$p_3$};
\node[poset_node] (p1) at (2, 1.2) {$p_2$};

\node[poset_node] (p5) at (1, 2.4) {$p_6$};
\node[poset_node] (p4) at (2, 2.4) {$p_5$};

\node[poset_node] (p6) at (0, 3.6) {$p_7$};
\node[poset_node] (p7) at (2, 3.6) {$p_8$};

\node[poset_node] (p8) at (1, 4.8) {$p_9$};

\draw[connection] (p0) -- (p3);
\draw[connection] (p0) -- (p2);
\draw[connection] (p0) -- (p1);
\draw[connection] (p00) -- (p3);
\draw[connection] (p00) -- (p2);
\draw[connection] (p00) -- (p1);
\draw[connection] (p3) -- (p6);
\draw[connection] (p2) -- (p5);
\draw[connection] (p1) -- (p4);
\draw[connection] (p5) -- (p7);
\draw[connection] (p4) -- (p7);
\draw[connection] (p6) -- (p8);
\draw[connection] (p7) -- (p8);
\end{scope}

\begin{scope}[xshift=5cm]
\node[poset_node_label] (l0) at (0, 0) {$0$};
\node[poset_node_label] (l00) at (2, 0) {$0$};

\node[poset_node_label] (l3) at (0, 1.2) {$1$};
\node[poset_node_label] (l2) at (1, 1.2) {$1$};
\node[poset_node_label] (l1) at (2, 1.2) {$1$};

\node[poset_node_label] (l5) at (1, 2.4) {$1$};
\node[poset_node_label] (l4) at (2, 2.4) {$1$};

\node[poset_node_label] (l6) at (0, 3.6) {$1$};
\node[poset_node_label] (l7) at (2, 3.6) {$2$};

\node[poset_node_label] (l8) at (1, 4.8) {$2$};

\draw[connection] (l0) -- (l3);
\draw[connection] (l0) -- (l2);
\draw[connection] (l0) -- (l1);
\draw[connection] (l00) -- (l3);
\draw[connection] (l00) -- (l2);
\draw[connection] (l00) -- (l1);
\draw[connection] (l3) -- (l6);
\draw[connection] (l2) -- (l5);
\draw[connection] (l1) -- (l4);
\draw[connection] (l5) -- (l7);
\draw[connection] (l4) -- (l7);
\draw[connection] (l6) -- (l8);
\draw[connection] (l7) -- (l8);
\end{scope}
\end{tikzpicture}
\caption{A poset and its labelling.}
\label{fig:labelling}
\end{figure}
This labelling function is related to the measure of a poset, defined in \cite{MR294454}. The discussion of this relation is postponed to \cref{sect:measure}. Also, this labelling function induces a vanishing bound for the higher (co)limits of any functor as made precise by the following theorem.

%%%%%%%%%%%%%%%%%%%%%%%%%%%%%%%%%%%%%%%%%%%%%%%%%
\begin{thm}
\label{bound_by_B}
%%%%%%%%%%%%%%%%%%%%%%%%%%%%%%%%%%%%%%%%%%%%%%%%%
Let $\CP$ be a filtered poset, and ${B\colon \CP\to \NN}$ be its associated labelling function. Given a unital commutative ring $R$ and a functor $F\colon \CP^\op\to R\Mod$, we have:
\[
H^k(\CP;F)=0
\]
for every $k>\sup B$.
\end{thm}
\begin{proof}
We proceed by induction on the degree of $p$ to show that ${\alt(\NR F(p))\le B(p)}$ for all $p\in\CP$.

If $p\in \CP$ has degree $0$ or $1$, the result holds by definition.
For $p\in \CP$ of degree $n$, assume that $\alt(\NR F(s))\le B(s)$ for every $s\in \CP_{<p}$. By the induction hypothesis,
\[
\alt(\lim_{\CP_{<p}}\NR F)\le\max\{\alt(\NR F(s))\mid s<p\} \le \max\{B(s)\mid s<p\}=:m.
\]
If $\alt(\lim_{\CP_{<p}}\NR F)<m$ or $B(p)=m+1$, the result holds since
\[
\alt(\NR F(p)) \le \alt(\lim_{\CP_{<p}}\NR F)+1\le B(p).
\]
Therefore, we must only prove the result when $\alt(\lim_{\CP_{<p}}\NR F)=B(p)=m$. Notice that $\CB_p$ is an upper convex subposet of $\CP$, and for every $q<p$ with $q\not \in \CB_p$, we have $\NR F^k(q)=0$ for $k=m-1,m$. Therefore, for $k=m-1,m$, we have the following isomorphisms:
\[
\lim_{\CB_p}\NR F^{k}\cong \lim_{\CP_{<p}}\NR F^{k}.
\]
Next, since $B(p)=m$, $p$ does not close any circuit; thus, every connected component of $\CB_p$ has a single maximal element. Therefore, 
\[
\lim_{\CB_p}\NR F\cong {\bigoplus}_q\NR F(q)
\]
where $q$ ranges over the maximal elements of $\CP_{<p}$. Since every $\NR F(q)$ is a (possibly truncated) mapping cocylinder of the morphism $\varepsilon_q$, the last non-zero differential of the cochain complex $\bigoplus_q \NR F(q)$ is an epimorphism. Therefore, $\varepsilon_p$ is cotruncatable, and hence 
\[
\alt(\NR F(p))=\alt(\lim_{\CP_{<p}}\NR F)=m.
\]
Finally, we obtain the desired vanishing bound by applying Corollary~\ref{bound_from_height}.
\end{proof}

A \emph{filtered tree} is a filtered poset $\CP$ whose Hasse diagram contains no undirected cycles. If $\CP$ is a filtered tree, then no object $p\in \CP$ can close a circuit. Thus, we obtain the following corollary.

%%%%%%%%%%%%%%%%%%%%%%%%%%%%%%%%%%%%%%%%%%%%%%%%%
\begin{cor}
\label{cor:tree}
%%%%%%%%%%%%%%%%%%%%%%%%%%%%%%%%%%%%%%%%%%%%%%%%%
Let $\CP$ be a filtered tree. Then, for every functor $F\colon \CP^\op\to R\Mod$ and every $k>1$, we have
\[
H^k(\CP;F)=0.
\] 
\end{cor}

If $\CP$ is a filtered poset, a \emph{maximal tree} of $\CP$ is a wide subposet $\CT\le \CP$ such that $\CT$ is a tree, and for every subposet $\CP'$ with $\CT\subsetneq \CP'\subset \CP$, the poset $\CP'$ is not a tree. Given a poset $\CP$ and a maximal tree $\CT$, we define
\begin{equation*}
D(\CP,\CT)=\{d(q)\mid p\to q\in \CP\setminus \CT\}.
\end{equation*}
The next result uses \cref{bound_by_B} to give another geometric interpretation of a vanishing bound.
%%%%%%%%%%%%%%%%%%%%%%%%%%%%%%%%%%%%%%%%%%%%%%%%%
\begin{thm}
\label{Vanishingbouds/Tree/thm/bound_tree}
%%%%%%%%%%%%%%%%%%%%%%%%%%%%%%%%%%%%%%%%%%%%%%%%%
Let $\CP$ be a filtered poset, and $\CT$ be a maximal tree of $\CP$.
Then, for every functor $F\colon \CP^\op\to R\Mod$:
\[
H^k(\CP;F)=0
\]
for every $k> 2\# D(\CP,\CT)+1$.
\end{thm}
\begin{proof}
We prove instead that $\max B \le 2\# D(\CP,\CT)+1$ and we proceed by induction on $\# D(\CP,\CT)$. The basis case is when $\# D(\CP,\CT)=0$. In this case, $\CP$ is a tree and $\max B \le 1$, so the result follows.

Next, assume that the result holds for every poset such that $\#D(\CP',\CT')=n$ and let us prove it for $\#D(\CP,\CT)=n+1$.
Let $\CP$ and $\CT$ be as in the statement with $\#D(\CP,\CT)=n+1$. Let $k=\max D(\CP,\CT)$ and let $\CP'$ be the subposet of $\CP$ obtained by removing all relations $p\prec q \in \CP\setminus \CT$ such that $d(q)=k$. Denote by $B_{\CP'}$ the labelling function of $\CP'$.

Then $\#D(\CP',\CT)=\#D(\CP,\CT)-1=n$. By induction hypothesis, 
\[
{\max B_{\CP'} \le 2\# D(\CP',\CT)+1\le 2n+1}.
\]
Therefore, by definition of the labelling function, for every element $p\in \CP$ with $d(p)<k$, $B_{\CP'}(p)=B(p)\le 2n+1$. Then, for every relation $p\prec q\in \CP\setminus\CT$ with $d(q)=k$, we have 
\begin{equation}\label{eq:proof-missingarrow}
B(q)\le 2(n+1).
\end{equation}

Now, we prove that for every $p\in \CP$, $B(p)\le 2(n+1)+1$ by contradiction. Assume that there exists $p\in \CP$ such that $B(p)=2(n+1)+2$ and every element $p'\le p$ satisfies $B(p')<B(p)$.
Then, a connected component of $\CB_p$ has at least two maximal elements. But the elements in $\CB_p$ satisfy $B(q)$ equals either $2(n+1)$ or $2(n+1)+1$, see \eqref{eq:proof-missingarrow}. In both cases, this implies $\CB_p$ is a full subposet of $\CT$, which contradicts $\CT$ being a tree.
\end{proof}

Figure~\ref{fig:optimal_for_n} shows that the bound for $B$ in the proof above is sharp.

\begin{figure}[h!]
\centering
\begin{tikzpicture}[
x=1.5cm, y=1cm,
dot2/.style={circle, draw=darkslate, fill=vanilla!95!darkslate, ultra thick, inner sep=4pt, font=\bfseries},
dot/.style={circle, draw=darkslate, fill=vanilla!80!honey, ultra thick, inner sep=4pt, font=\bfseries},
dot3/.style={circle, draw=darkslate, fill=honey, ultra thick, inner sep=4pt, font=\bfseries},
connection/.style={draw=darkslate, thick}
]

\node[dot] (v00) at (-2, 0) {0};
\node[dot] (v10) at (-1, 0) {0};
\node[dot] (v11) at (-1, 1) {1};
\node[dot] (v21) at (0, 1) {1};
\node[dot] (v02) at (-2, 2) {1};
\node[dot] (v22) at (0, 2) {2};
\node[dot] (v13) at (-1, 3) {3};

\node[dot2] (v30) at (1, 0) {0};
\node[dot2] (v40) at (2, 0) {0};
\node[dot2] (v41) at (2, 1) {1};
\node[dot2] (v51) at (3, 1) {1};
\node[dot2] (v32) at (1, 2) {1};
\node[dot2] (v52) at (3, 2) {2};
\node[dot2] (v43) at (2, 3) {3};

\node[dot3] (v44) at (2, 4) {3};
\node[dot3] (v54) at (3, 4) {3};
\node[dot3] (v15) at (-1, 5) {3};
\node[dot3] (v55) at (3, 5) {4};
\node[dot3] (v36) at (1, 6) {5};

\draw[connection] (v00) -- (v02);
\draw[connection] (v10) -- (v11);
\draw[connection] (v10) -- (v21);
\draw[connection, dashed] (v11) -- (v02);
\draw[connection, dashed] (v11) -- (v22);
\draw[connection] (v21) -- (v22);
\draw[connection] (v02) -- (v13);
\draw[connection] (v22) -- (v13);

\draw[connection] (v30) -- (v32);
\draw[connection] (v40) -- (v41);
\draw[connection] (v40) -- (v51);
\draw[connection, dashed] (v41) -- (v32);
\draw[connection, dashed] (v41) -- (v52);
\draw[connection] (v51) -- (v52);
\draw[connection] (v32) -- (v43);
\draw[connection] (v52) -- (v43);

\draw[connection] (v43) -- (v44);
\draw[connection] (v43) -- (v54);
\draw[connection, dashed] (v44) -- (v15);
\draw[connection, dashed] (v44) -- (v55);
\draw[connection] (v54) -- (v55);
\draw[connection] (v13) -- (v15);
\draw[connection] (v15) -- (v36);
\draw[connection] (v55) -- (v36);

\end{tikzpicture}
\caption{}
\label{fig:optimal_for_n}
\end{figure}
By dualising the arguments above, we obtain the dual version of the preceding theorems.
%%%%%%%%%%%%%%%%%%%%%%%%%%%%%%%%%%%%%%%%%%%%%%%%%
\begin{thm}
\label{bound_by_B_colim}
%%%%%%%%%%%%%%%%%%%%%%%%%%%%%%%%%%%%%%%%%%%%%%%%%
Let $\CP$ be a filtered poset, and ${B\colon \CP\to \NN}$ be its associated labelling function. Given a unital commutative ring $R$ and a functor $F\colon \CP\to R\Mod$, we have:
\[
H_k(\CP;F)=0
\]
for every $k>\sup B$. \qed
\end{thm}
This yields the analogous corollary for filtered trees.

\begin{cor}
Let $\CP$ be a filtered tree. Then, for every functor $F\colon \CP\to R\Mod$, and every $k>1$, we have
\[
H_k(\CP;F)=0.
\] 
\end{cor}

As before, we recover the vanishing bound induced by the maximal tree.

%%%%%%%%%%%%%%%%%%%%%%%%%%%%%%%%%%%%%%%%%%%%%%%%%
\begin{thm}
\label{Vanishingbouds/Tree/thm/bound_tree_dual}
%%%%%%%%%%%%%%%%%%%%%%%%%%%%%%%%%%%%%%%%%%%%%%%%%
Let $\CP$ be a filtered poset, and $\CT$ be a maximal tree of $\CP$.
Then, for every functor  $F\colon \CP\to R\Mod$:
\[
H_k(\CP;F)=0
\]
for every $k> 2\# D(\CP,\CT)+1$.
\end{thm}

%%%%%%%%%%%%%%%%%%%%%%%%%%%%%%%%%%%%%%%%%%%%%%%%%
%%%%%%%%%%%%%%%%%%%%%%%%%%%%%%%%%%%%%%%%%%%%%%%%%
%%%%%%%%%%%%%%%%%%%%%%%%%%%%%%%%%%%%%%%%%%%%%%%%%
\subsection{Relation with the measure of a poset.}\label{sect:measure}
%%%%%%%%%%%%%%%%%%%%%%%%%%%%%%%%%%%%%%%%%%%%%%%%%
%%%%%%%%%%%%%%%%%%%%%%%%%%%%%%%%%%%%%%%%%%%%%%%%%
%%%%%%%%%%%%%%%%%%%%%%%%%%%%%%%%%%%%%%%%%%%%%%%%%
In this section we compare the labelling function $B$ described in Definition~\ref{labelling_function} with the measure of a poset, as defined by Mitchell in \cite{MR294454}.

Let $\CP$ be a filtered poset. We say that $p\in \CP$ is \emph{unicovered} if the set $\{q\in \CP \mid p\prec q\}$ has exactly one element.
Let $\CP$ be a filtered poset with an initial object $\hat 0$ and a terminal object $\hat 1$. We define the \emph{measure} of $\CP$, denoted by $\mu(\CP)$, inductively as follows. First, $\mu(\CP)=0$ if and only if $\CP$ has only one element. Now assume $n>0$ and that we have defined what we mean by $\mu(\CP)\le n-1$. An $n$-sequence in $\CP$ is a sequence of elements $p_1, p_2,\dots, p_k=\hat 0$, not necessarily ordered, such that, for each $i,\ 1\le i \le k$, we have:
\begin{enumerate}
\item $p_i$ is unicovered in $\CP\setminus\{p_1,\dots,p_{i-1}\}$, and
\item $\mu\Big((\CP\setminus\{p_1,\dots ,p_{i-1}\})_{\le p_i}\Big) \le n-1$.
\end{enumerate}
\begin{minipage}{0.7\textwidth}
The measure of $\CP$ is the smallest non-negative integer such that there exists an $n$-sequence in $\CP$. For a filtered poset $\CP$, one defines:
\[
\delta(\CP)=\sup\Big\{\mu(\CP_{\ge q}\cap \CP_{\le p})\mid q\le p \Big\}
\]
Notice that even if $\CP$ is a poset with an initial and a terminal object, $\delta(\CP)$ does not need to be equal to $\mu(\CP)$. For example, let $\CP$ be the poset whose Hasse diagram is drawn in Figure~\ref{fig:delta>mu}. The sequence consisting only of $\hat 0$ is a $1$-sequence, since $\hat 0$ is unicovered in $\CP$, and $\mu(\CP_{\le \hat 0}) = 0$. 
Hence $\mu(\CP)\le 1$. However, $\delta(\CP)\ge \mu(\CP_{\ge q}\cap \CP_{\le \hat 1})=2$.
\end{minipage}
\hfill
\begin{minipage}{0.25\textwidth}

\begin{tikzpicture}[scale=.8,
x=1.5cm, y=1.25cm,
dot2/.style={circle, draw=darkslate, fill=vanilla!95!darkslate, ultra thick, inner sep=4pt, font=\bfseries},
dot/.style={circle, draw=darkslate, fill=vanilla!80!honey, ultra thick, inner sep=4pt, font=\bfseries},
dot3/.style={circle, draw=darkslate, fill=honey, ultra thick, inner sep=4pt, font=\bfseries},
connection/.style={draw=darkslate, thick}
]

\node[dot] (00) at (0, 0) {$\hat{0}$};
\node[dot] (q) at (0, 1) {$q$};
\node[dot] (r2) at (1, 2) {$r_2$};
\node[dot] (r1) at (-1, 2) {$r_1$};
\node[dot] (p) at (0, 3) {$\hat{1}$};

\draw[connection] (00) -- (q);
\draw[connection] (q) -- (r2);
\draw[connection] (q) -- (r1);
\draw[connection] (r2) -- (p);
\draw[connection] (r1) -- (p);
\end{tikzpicture}
\captionof{figure}{}
\label{fig:delta>mu}
\end{minipage}

%%%%%%%%%%%%%%%%%%%%%%%%%%%%%%%%%%%%%%
\begin{thm}
\label{thm:delta<=B}
%%%%%%%%%%%%%%%%%%%%%%%%%%%%%%%%%%%%%%
Let $\CP$ be a filtered poset, $B\colon \CP\to \NN$ the labelling function of $\CP$. Then,
\[
\delta(\CP)\le \sup_{p\in \CP} B(p).
\]
\end{thm}
The proof of this theorem relies on the following auxiliary lemma and Remark~\ref{rmk:B_q_1 vs B_q_2}.
\begin{lem}
\label{lem:mu<=B}
Let $\CP$ be a filtered poset with initial and terminal objects, $\hat 0$ and $\hat 1$ respectively. Then,
\[
\mu(\CP)\le B(\hat 1)
\]
\end{lem}
\begin{proof}
We proceed by induction on $B(\hat 1)$. If $B(\hat 1)=0$, then $\CP$ has only one element, so $\mu(\CP)=0$. 
If $B(\hat 1)=1$, then $\CP$ is a total order, so $\mu(\CP)=1$. Assume the result holds for every poset such that $B(\hat 1)<m$ and let $\CP$ be a poset with $B(\hat 1)=m$.

We recursively define $Q_0=\varnothing$; and, for $i\ge 1$,
\[
Q_i=\max\left\{q\in \CP\setminus\left({\bigcup}_{j<i} Q_j\right)\Big\vert B(q)=m-1\right\},
\]
and choose any ordering of its elements $q_1^i,q_2^i,\dots, q_{r_i}^i$. Since $\CP$ is a filtered poset, $\CB_{\hat 1}$ is finite, hence there exists $k\in \NN$ such that $Q_j=\emptyset$ for every $j>k$. Next, let 
\[
{\CP'=\CP\setminus\left(\bigcup_{j\le k} Q_j\right)},
\]
and $B'$ be its associated labelling function. By construction, ${B'(\hat 1)\le m-1}$. Therefore, by the induction hypothesis, $\mu(\CP')\le m-1$, so there exists an $(m-1)$-sequence in $\CP'$, call it $p_1,\dots,p_{r'}=\hat 0$.

Then, we claim that the sequence:
\[
q_1^1,q_2^1,\dots,q_{r_1}^1,q_1^2,\dots, q_i^j,\dots, q_{r_k}^k,p_1,\dots, p_{r'}=\hat 0
\]
is an $m$-sequence in $\CP$. First, for the elements $q_i^j$:
\begin{enumerate}
\item $q_i^j$ is unicovered in $\CP\setminus\{q_1^1,\dots,q_{i-1}^j\}$:
\begin{enumerate}
\item If $\hat 1$ closes a circuit in $\CB_{\hat 1}$, then $q_i^j$ is unicovered in $\CP\setminus\{q_1^1,\dots,q_{i-1}^j\}$ because $q_i^j\prec \hat 1$ in $\CP\setminus\{q_1^1,\dots, q_{r_{j-1}}^{j-1}\}$.
\item Otherwise, assume by contradiction that $\hat 1$ does not close a circuit in $\CB_{\hat 1}$ and $q_i^j$ is not unicovered in $\CP\setminus\{q_1^1,\dots,q_{i-1}^j\}$. Then, there exist $\tilde p_1,\tilde p_2$ such that  $q_i^j<\tilde p_1,\tilde p_2<\hat 1$. But, by construction of the sequence, $B(\tilde p_1)=B(\tilde p_2)=m$.  Then, $B(\hat 1)$ could not be $m$ and this is a contradiction.
\end{enumerate} 
\item  $\mu\left(\left(\CP\setminus\{q_1^1,\dots,q_{i-1}^j\}\right)_{\le q_i^j}\right)= \mu(\CP_{\le q_i^j})\le B(q_i^j)=m-1$ by the induction hypothesis.
\end{enumerate}
Finally, for the elements $p_i$, both conditions hold since 
\[
\CP\setminus\{q_1^1,\dots,q_{r_k}^k,p_1,\dots,p_{i-1}\}=\CP'\setminus\{p_1,\dots,p_{i-1}\}
\]
and $p_1,\dots,p_{r'}=\hat 0$ is an $(m-1)$-sequence in $\CP'$.
\end{proof}
%%%%%%%%%%%%%%%%%%%%%%%%%%%%%%%%%%%
\begin{rmk}
\label{rmk:B_q_1 vs B_q_2}
%%%%%%%%%%%%%%%%%%%%%%%%%%%%%%%%%%%  
Let $\CP$ be a filtered poset, and  let $B_{q,p}$ be the associated labelling function for the subposet $\CP_{\ge q}\cap \CP_{\le p}$, for every $p\le q\in \CP$. Given $q_1\le q_2\le p$, the circuits present in $\CP_{\ge q_2}\cap \CP_{\le p}$ are also present in $\CP$ and in $\CP_{\ge q_1}\cap \CP_{\le p}$. Thus, we have:
\[
B(r)\ge B_{q_1,p}(r)\ge B_{q_2,p}(r),
\]
for every $r\in \CP$ such that $q_2\le r\le p$.
\end{rmk}
\begin{proof}[Proof of \cref{thm:delta<=B}.]
For every comparable pair $q\le p$ in $\CP$, the interval $\CP_{\ge q}\cap \CP_{\le p}$ is a filtered poset with initial object $q$ and terminal object $p$. Let $B_{q,p}$ be its associated labelling function. By Lemma~\ref{lem:mu<=B}, we have $\mu(\CP_{\ge q}\cap \CP_{\le p})\le B_{q,p}(p)$. 
By Remark~\ref{rmk:B_q_1 vs B_q_2}, we have $B_{q,p}(p)\le B(p)$. Thus, we obtain
\[
\mu(\CP_{\ge q}\cap \CP_{\le p})\le B_{q,p}(p)\le B(p).
\]
Taking the supremum over all comparable pairs $q\le p$, we conclude:
\begin{align*}
\delta(\CP)=&\sup\{\mu(\CP_{\ge q}\cap \CP_{\le p})\mid q\le p\}
\le \sup\{B_{q,p}(p) \mid q\le p\}  \\
\le&\sup \{B(p)\mid p\in \CP\}. \qedhere
\end{align*}
\end{proof}

\begin{figure}[h!]
\centering
\begin{tikzpicture}[
x=1cm, y=.5cm,
dot2/.style={circle, draw=darkslate, fill=vanilla!95!darkslate, ultra thick, inner sep=4pt, font=\bfseries},
dot/.style={circle, draw=darkslate, fill=vanilla!80!honey, ultra thick, inner sep=4pt},
dot3/.style={circle, draw=darkslate, fill=honey, ultra thick, inner sep=4pt, font=\bfseries},
connection/.style={draw=darkslate, thick}
]

\node[dot] (000) at (1, 0) {};
\node[dot] (00) at (-1, 0) {};

\node[dot] (10) at (-2, 2) {};
\node[dot] (12) at (2, 2) {};

\node[dot] (20) at (-2, 4) {};
\node[dot] (21) at (0, 3) {};
\node[dot] (22) at (2, 4) {};

\node[dot] (30) at (-3, 6) {};
\node[dot] (31) at (-1, 6) {};
\node[dot] (32) at (0, 6.5) {};
\node[dot] (33) at (1, 6) {};
\node[dot] (34) at (3, 6) {};

\node[dot] (40) at (-2, 8) {};
\node[dot] (41) at (2, 8) {};

\node[dot] (111) at (0, 10) {};

\draw[connection] (00) -- (10);
\draw[connection] (00) -- (21);
\draw[connection] (00) -- (12);
\draw[connection] (000) -- (10);
\draw[connection] (000) -- (21);
\draw[connection] (000) -- (12);

\draw[connection] (10) -- (20);
\draw[connection] (12) -- (22);

\draw[connection] (20) -- (30);
\draw[connection] (20) -- (31);
\draw[connection] (21) -- (32);
\draw[connection] (22) -- (33);
\draw[connection] (22) -- (34);

\draw[connection] (30) -- (40);
\draw[connection] (31) -- (40);
\draw[connection] (32) -- (40);
\draw[connection] (32) -- (41);
\draw[connection] (33) -- (41);
\draw[connection] (34) -- (41);

\draw[connection] (40) -- (111);
\draw[connection] (41) -- (111);
\end{tikzpicture}
\caption{}\label{Fig:delta<B}
\end{figure}
\begin{rmk}
The equality $\delta(\CP)= \sup\{B(q)\mid q\in \CP\}$ does not hold in general and the poset $\CP$ whose Hasse diagram is drawn in Figure~\ref{Fig:delta<B} is an example of this. Moreover, despite $\delta(\CP)$ being a vanishing bound for the higher limits of any functor, it is not always a bound for the height of its fibrant replacement: The constant functor over the poset $\CP$ is also an example of this.
\end{rmk}

%%%%%%%%%%%%%%%%%%%%%%%%%%%%%%%%%%%%%%%%%%%%%%%%%
%%%%%%%%%%%%%%%%%%%%%%%%%%%%%%%%%%%%%%%%%%%%%%%%%
\subsection{Application to Hypergraph Cohomology}
%%%%%%%%%%%%%%%%%%%%%%%%%%%%%%%%%%%%%%%%%%%%%%%%%
%%%%%%%%%%%%%%%%%%%%%%%%%%%%%%%%%%%%%%%%%%%%%%%%%
Hypergraphs generalise graphs by allowing edges—called \emph{hyperedges}—to contain an arbitrary number of vertices, not just two. This extra flexibility captures higher-order incidence relations that cannot be faithfully encoded by a graph. 
\begin{defn}
An undirected hypergraph is a tuple $\CH=(V,E)$ where $V$ is a finite set and $E$ is a collection of subsets of $V$. The elements of $V$ are called vertices (or nodes), and the elements of $E$ are called hyperedges.
\end{defn}
Notice that we do not allow multiple hyperedges between the same objects, i.e., the set of edges $E$ is actually a set of sets, not a multiset of sets.

\begin{figure}[h!]
\centering
\begin{tikzpicture}[node distance=60pt]
\node[circle, fill=vanilla, draw=darkslate, ultra thick, inner sep=4pt, label=above:\(v_1\)] (v1) {};
\node[circle, fill=vanilla, draw=darkslate, ultra thick, inner sep=4pt, right of=v1, label=above:\(v_2\)] (v2) {};
\node[circle, fill=vanilla, draw=darkslate, ultra thick, inner sep=4pt, right of=v2, label=above:\(v_3\)] (v3) {};
\node[circle, fill=vanilla, draw=darkslate, ultra thick, inner sep=4pt, below of=v1, label=below:\(v_4\)] (v4) {};
\node[circle, fill=vanilla, draw=darkslate, ultra thick, inner sep=4pt, right of=v4, label=below:\(v_5\)] (v5) {};
\node[circle, fill=vanilla, draw=darkslate, ultra thick, inner sep=4pt, right of=v5, label=below:\(v_6\)] (v6) {};
\node[circle, fill=vanilla, draw=darkslate, ultra thick, inner sep=4pt, below of=v5, label=below:\(v_7\)] (v7) {};
\begin{pgfonlayer}{background}
\tikzset{edge/.style={line cap=round, line join=round, opacity=0.5}}
\begin{scope}[transparency group, opacity=.5]   
\draw[line cap=round, line join=round, line width=50pt, color=darkslate, fill=darkslate] (v4.center) -- (v1.center) -- (v2.center) -- (v3.center) -- cycle;
\end{scope}
\begin{scope}[transparency group, opacity=.5]
\draw[edge, line width=50pt, color=honey] (v3.center) -- (v5.center) -- (v6.center) -- cycle;
\end{scope}
\draw[edge, line width=50pt, color=brownred] (v1.center) -- (v2.center);
\draw[edge, line width=50pt, color=brownred] (v2.center) -- (v5.center);
\begin{scope}[transparency group, opacity=.5]   
\draw[edge, line width=50pt, color=honey] (v4.center) -- (v5.center) -- (v7.center) --cycle;
\end{scope}
\end{pgfonlayer}
\end{tikzpicture}
\caption{A hypergraph}\label{fig:hipergraph}   
\end{figure}
In a graph, we represent vertices as points and edges as lines connecting them. In a hypergraph, we also represent vertices as points and edges as closed loops or areas enclosing the vertices they contain; see Figure~\ref{fig:hipergraph}. Observe that graphs may be viewed as $1$-dimensional simplicial complexes. In contrast, for a general hypergraph, subedges of an $n$-edge need not belong to the hypergraph. Nevertheless, one can still define the associated face poset.

\begin{defn}
Given a hypergraph $\CH=(V,E)$, we define the \emph{face poset} $\CP(\CH)$ to be the poset whose underlying set is $V\cup E$ and whose order is given by inclusion. 
The simplicial closure of the hypergraph, denoted by $\Delta(\CH)$, is the simplicial complex obtained by closing $V\cup E$ under taking subsets.
\end{defn}

There are multiple ways of defining the cohomology of a hypergraph (see \cite{GPSZ26} for a beautiful overview), but for us, the most natural one is to consider the face poset of the hypergraph. A decisive structural advantage of this approach is that $\CP(\CH)$ records only the hyperedges actually present in $\CH$, whereas the simplicial closure $\Delta(\CH)$ artificially adds all sub-simplices, inflating the poset and distorting the combinatorics. In particular, the length of $\CP(\CH)$ can be substantially smaller than that of $\CP(\Delta(\CH))$, see Figure~\ref{fig:hypergraph_intro_comparison}.
\begin{figure}[h!]
\centering
\begin{tikzpicture}[
poset_node/.style={circle, draw=darkslate!80, fill=darkslate!10, thick, inner sep=1.5pt, minimum size=6mm},
connection/.style={-, thick, darkslate!70},
vertex/.style={circle, draw=darkslate, fill=darkslate, inner sep=1.5pt}
]

\begin{scope}[shift={(0,0)}]
\node (title) at (0, -2.2) {\small \textbf{Hypergraph} $\mathcal{H}$};

\coordinate (A) at (-0.8, -0.6); 
\coordinate (B) at (1.0, -0.4);  
\coordinate (C) at (0, 1.2);     
\coordinate (D) at (0.3, 0.2);   

\filldraw[honey!30, draw=none, rounded corners=15pt, fill opacity=0.6] 
(-1.2,-1.0) -- (1.4,-0.8) -- (0.5, 1.6) -- (-1.1, 1.2) -- cycle;

\draw[thick, darkslate] (A) -- (B);

\node[vertex, label=left:{\footnotesize $2$}] (v2) at (A) {};
\node[vertex, label=right:{\footnotesize $3$}] (v3) at (B) {};
\node[vertex, label=above:{\footnotesize $1$}] (v1) at (C) {};
\node[vertex, label=below:{\footnotesize $4$}] (v4) at (D) {};
\end{scope}

\begin{scope}[shift={(4.5,0)}]
\node (title) at (0, -2.2) {\small \textbf{Face Poset} $\mathcal{F}(\mathcal{H})$};

\node[poset_node] (n1) at (-1.2, -0.8) {\footnotesize $1$};
\node[poset_node] (n2) at (-0.4, -0.8) {\footnotesize $2$};
\node[poset_node] (n3) at (0.4, -0.8) {\footnotesize $3$};
\node[poset_node] (n4) at (1.2, -0.8) {\footnotesize $4$};

\node[poset_node] (n23) at (0, 0.3) {\footnotesize $23$};
\node[poset_node] (n1234) at (0, 1.5) {\footnotesize $1234$};

\draw[connection] (n2) -- (n23);
\draw[connection] (n3) -- (n23);

\draw[connection] (n1) -- (n1234);
\draw[connection] (n23) -- (n1234);
\draw[connection] (n4) -- (n1234);
\end{scope}

\begin{scope}[shift={(9,0)}]
\node (title) at (0, -2.2) {\small \textbf{Simplicial Closure} $\Delta(\mathcal{H})$};

\coordinate (A) at (-0.8, -0.6); 
\coordinate (B) at (1.0, -0.4);  
\coordinate (C) at (0, 1.2);     
\coordinate (D) at (0.3, 0.2);   

\fill[honey!50, opacity=0.8] (A) -- (B) -- (D) -- cycle;
\fill[honey!30, opacity=0.8] (B) -- (C) -- (D) -- cycle;
\fill[honey!70, opacity=0.8] (C) -- (A) -- (D) -- cycle;

\draw[thick, darkslate] (A) -- (B) -- (C) -- cycle;
\draw[thick, darkslate] (A) -- (D);
\draw[thick, darkslate] (B) -- (D);
\draw[thick, darkslate] (C) -- (D);

\node[vertex, label=left:{\footnotesize $2$}] (v2) at (A) {};
\node[vertex, label=right:{\footnotesize $3$}] (v3) at (B) {};
\node[vertex, label=above:{\footnotesize $1$}] (v1) at (C) {};
\node[vertex, label=below:{\footnotesize $4$}] (v4) at (D) {};
\end{scope}
\end{tikzpicture}
\caption{Comparison of a hypergraph $\mathcal{H}$ with its Hasse diagram for the face poset $\mathcal{P}(\mathcal{H})$, and its simplicial closure $\Delta(\mathcal{H})$.}
\label{fig:hypergraph_intro_comparison}
\end{figure}

Hypergraphs are particularly relevant in network science, where a \emph{network} is the mathematical model of a system of entities and their interactions. Classical network models rely on graphs, where nodes represent entities and edges encode pairwise relations. However, many real-world systems exhibit \emph{higher-order interactions}: in a co-authorship network, nodes are researchers and a hyperedge groups all co-authors of a single article; in a plant-pollinator network, nodes are species and each pollination event—involving several plants and pollinators at once—is naturally encoded as a hyperedge. Such collective interactions cannot be faithfully reduced to pairwise edges without loss of combinatorial information, making hypergraphs the appropriate structure. Functors $F\colon \CP(\CH)^\op\to R\Mod$ over the face poset of a hypergraph provide a flexible algebraic framework for studying such data; in the applied literature these are often called \emph{sheaves} \cite{Curry14, HansenGhrist2021, Robinson2017}, a terminology that underscores the local-to-global character of the construction. Going beyond the constant functor is particularly relevant, as non-constant sheaves can encode local weights or orientations associated with each hyperedge.

Below we illustrate this framework on two empirical datasets: a co-authorship hypergraph from the \texttt{arXiv} metadata and a plant-pollinator interaction dataset. In both cases we compute $\sup B$ algorithmically and compare it with the classical bound given by the length of the poset, showing that \cref{bound_by_B} gives a strictly sharper result.

\begin{exm}
\label{exm:hypergraph-arxiv-kaggle}
Let $\mathcal{H} = (V, E)$ be a co-authorship hypergraph constructed from the \texttt{arXiv} metadata dataset hosted on Kaggle \cite{arxiv_kaggle_dataset}. For this analysis, the network was processed using the \texttt{XGI} library \cite{xgi2023}, extracting a sub-network consisting of the first $7,000$ interactions (articles). The resulting structure comprises $\#V = 18,127$ nodes, with a maximum hyperedge size of $99$, reflecting the presence of large-scale scientific collaborations.

To investigate the combinatorics of the hypergraph, we constructed the face poset $\CP(\CH)$. The system exhibits a poset height of $3$. Therefore, it is natural to expect non-zero cohomology groups in dimensions $0$, $1$, $2$, and $3$.
For computational reasons, we compute the cohomology of the constant functor with coefficients in the field $\QQ$. We obtain that the third cohomology group vanishes. This shows that the vanishing bound provided by the length of the poset is not optimal.

We implemented the labelling function $B$ in Sagemath, see \cite{Carrion-Github}, for the face poset associated with the sampled co-authorship hypergraph. The computation gives $\sup B=2$.
Therefore, \cref{bound_by_B} yields
\[
H^k(\CP;F)=0\quad\text{for every }k>2
\]
for any functor $F\colon \CP^\op\to R\Mod$. In this case, the bound is sharp: for the constant functor, one still detects nontrivial cohomology in degree $2$, while higher degrees vanish.
\end{exm}

\begin{exm}
\label{exm:hypergraph-plant-pollinator}
To further illustrate the applicability of our bound, we performed an analogous analysis on a different ecological context: the plant-pollinator interaction dataset \texttt{plant-pollinator-mpl-062} (see \cite{lucas_2024_13753744}). The computation processed the network, which consists of $456$ nodes and $1044$ interactions. The resulting structure reveals a complex organizational pattern, characterized by a maximum hyperedge dimension of $157$, reflecting the presence of highly multi-species pollination events.

We constructed the face poset $\CP(\CH)$. The system exhibits a poset height of $4$, establishing a theoretical framework for non-trivial cohomology groups up to degree $4$. Furthermore, the maximum value of the labelling function is $\max B = 3$, providing an additional bound on the dimensions where topological features may manifest.

For the constant sheaf with coefficients in the rationals $\mathbb{Q}$, the computed cohomology groups reveal a rich structure in the lower dimensions:
\[
\dim H^0(\CP(\CH);\underline \QQ)=1,\ \dim H^1(\CP(\CH);\underline \QQ)=12642, \text{ and } \dim H^2(\CP(\CH);\underline \QQ)=32.
\]
Notably, the cohomology groups vanish for $k \ge 3$. This behavior is consistent with the bound induced by $\sup B = 3$, demonstrating how the intrinsic properties of the labelling function can refine the expectations derived solely from the poset height.
\end{exm}

%%%%%%%%%%%%%%%%%%%%%%%%%%%%%%%%%%%%%%%%%%%%%%%%%
%%%%%%%%%%%%%%%%%%%%%%%%%%%%%%%%%%%%%%%%%%%%%%%%%
%%%%%%%%%%%%%%%%%%%%%%%%%%%%%%%%%%%%%%%%%%%%%%%%%
%%%%%%%%%%%%%%%%%%%%%%%%%%%%%%%%%%%%%%%%%%%%%%%%%
\section{Inductive bound for the vanishing of the higher limits}
\label{sec:Inductive_bound}
%%%%%%%%%%%%%%%%%%%%%%%%%%%%%%%%%%%%%%%%%%%%%%%%%
%%%%%%%%%%%%%%%%%%%%%%%%%%%%%%%%%%%%%%%%%%%%%%%%%
%%%%%%%%%%%%%%%%%%%%%%%%%%%%%%%%%%%%%%%%%%%%%%%%%
%%%%%%%%%%%%%%%%%%%%%%%%%%%%%%%%%%%%%%%%%%%%%%%%%

While the combinatorial bound of \cref{sect:combinatorial_bound} depends only on the geometry of the underlying poset, the vanishing bounds introduced in this section are of a different nature: they are driven by the local algebraic information of the functor.  We first establish the result for higher limits; the dual statement for higher colimits is postponed to the end of the section.

The inductive vanishing bound presented here exploits the fact that the cotruncatability of the natural morphism
\[
F(p)\longrightarrow \slim_{\CP_{<p}}\NR F
\]
is controlled by the vanishing of the higher limits of $F$ restricted to the full subcategories $\CP_{<p}$: $p$ is $F$-truncatable if and only if $\alt(\lim_{\CP_{<p}}\NR F)=n>0$, and the last non-trivial differential of $\lim_{\CP_{<p}}\NR F$,
\[
\partial\colon(\lim_{\CP_{<p}}\NR F)^{n-1}\longrightarrow (\lim_{\CP_{<p}}\NR F)^{n},
\]
is an epimorphism. Moreover, the $n$th higher limit of $F$ restricted to $\CP_{<p}$ is the cokernel of $\partial$. This is the main ingredient for the following result.

%%%%%%%%%%%%%%%%%%%%%%%%%%%%%%%%%%%%%%%%%%%%%%%%%
\begin{thm}
\label{Thm-inductive1}
%%%%%%%%%%%%%%%%%%%%%%%%%%%%%%%%%%%%%%%%%%%%%%%%%
Let $\CP$ be a filtered poset and $R$ be a unital commutative ring. Let ${F\colon \CP^\op\to R\Mod}$ be a functor. If there exists an integer $n\ge 0$ such that for every $p\in \CP$, $H^k(\CP_{< p};F)=0$ for all $k\ge n$, then
\[
H^k(\CP;F)=0
\]
for all $k>n$.
\end{thm}

%%%%%%%%%%%%%%%%%%%%%%%%%%%%%%%%%%%%%%%%%%%%%%%%%
\begin{exm}
\label{exm:CW-complex}
%%%%%%%%%%%%%%%%%%%%%%%%%%%%%%%%%%%%%%%%%%%%%%%%%
If $\CP$ is the face poset of a regular CW-complex $X$, $\CP_{<p}$ corresponds to the $(d(p)-1)$-skeleton of the boundary of the $d(p)$-cell $p$. Then, if $\underline \ZZ$ is the constant functor, $H^k(\CP_{<p};\underline \ZZ)\cong H^k(\partial p;\ZZ)=0$ if $k\ge d(p)$. Therefore, \cref{Thm-inductive1} recovers the well-known bound for the cohomology of CW-complexes 
\[
H^k(X;\ZZ)=H^k(\CP;\underline \ZZ)=0
\]
if $k>\dim X=\max \{d(p)\mid p\in \CP\}$.
\end{exm}

To prove \cref{Thm-inductive1} we first need the following auxiliary lemmas.

%%%%%%%%%%%%%%%%%%%%%%%%%%%%%%%%%%%%%%%%%%%%%%%%%
\begin{prop}
\label{no jump property}
%%%%%%%%%%%%%%%%%%%%%%%%%%%%%%%%%%%%%%%%%%%%%%%%%
Let $\CP$ be a filtered poset and $R$ be a unital commutative ring. Let ${F\colon \CP^\op\to R\Mod}$ be a functor. Then, for every $0\le n\le \alt(\NR F)$, there exists $p\in \CP$ such that $\alt(\NR F(p))=n$. Furthermore, if $n>0$, there exists $p\in \CP$ such that $\alt(\lim_{\CP_{<p}}\NR F)=n-1$.
\end{prop}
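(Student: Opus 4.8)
The plan is to prove both assertions at once by an \emph{extremal} argument that exploits the strict monotonicity of the degree function $d$. First I would record how the construction in \cref{def_fibrant_rep} affects heights. Write $D_p := \lim_{\CP_{<p}}\NR F$ and note that $F(p)$ is concentrated in degree $0$, so $\alt(F(p))\le 0$. There are three cases: if $F$ is locally fibrant at $p$ then $\NR F(p)=F(p)$ has height $0$; if $\varepsilon_p$ is truncatable then $\NR F(p)=\cocyl_\NT(\varepsilon_p)$ and a short inspection of \cref{fact_truncatedcocyl} (the truncation of $D_p$ has height $\alt(D_p)-1$, and the middle factor of the cocylinder restores one degree) gives $\alt(\NR F(p))=\alt(D_p)$; otherwise $\NR F(p)=\cocyl(\varepsilon_p)$ and the height remark following \cref{factorisation_cocyl} gives $\alt(\NR F(p))=\alt(D_p)+1$. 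I would also use that limits of cochain complexes are computed degreewise, so that $(\lim_{\CP_{<p}}\NR F)^k$ embeds into $\prod_{q<p}\NR F(q)^k$; consequently $\alt(D_p)\le \max_{q<p}\alt(\NR F(q))$, and conversely $D_p^k\neq 0$ forces $\NR F(q)^k\neq 0$ for some $q<p$.

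Next, fix $n$ with $1\le n\le m$. Since heights are integer-valued and bounded above by $m=\alt(\NR F)$, the value $m$ is actually attained, so the set $S_n=\{p\in\CP : \alt(\NR F(p))\ge n\}$ is nonempty; I choose $p_n\in S_n$ of minimal degree $d(p_n)$, which exists because $d$ takes values in $\NN$. Because $d$ is strictly order-preserving, every $q<p_n$ satisfies $d(q)<d(p_n)$ and hence $q\notin S_n$, i.e. $\alt(\NR F(q))\le n-1$. Feeding this into the degreewise description of the limit yields $\alt(D_{p_n})\le n-1$.

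Now I would run the case analysis at $p_n$. As $\alt(\NR F(p_n))\ge n\ge 1$, the locally fibrant case (height $0$) is excluded, and the truncatable case is excluded as well, since it would force $\alt(\NR F(p_n))=\alt(D_{p_n})\le n-1<n$. Hence $\NR F(p_n)=\cocyl(\varepsilon_{p_n})$ and $\alt(\NR F(p_n))=\alt(D_{p_n})+1$; combining $\alt(\NR F(p_n))\ge n$ with $\alt(D_{p_n})\le n-1$ forces $\alt(D_{p_n})=n-1$ and $\alt(\NR F(p_n))=n$. Thus the single object $p_n$ simultaneously witnesses the main claim (an object of height exactly $n$) and the ``furthermore'' clause ($\alt(\lim_{\CP_{<p_n}}\NR F)=n-1$). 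For $n=0$: if $m=0$ there is nothing to add, while if $m\ge 1$, applying the above with $n=1$ gives $\alt(D_{p_1})=0$, so $D_{p_1}^0\neq 0$, and by the degreewise detection some $q<p_1$ has $\NR F(q)^0\neq 0$ with $\alt(\NR F(q))\le 0$, producing a height-$0$ object.

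The step I expect to be the main obstacle is the bookkeeping of the exact height formulas, in particular verifying that the truncated mapping cocylinder leaves the height unchanged whereas the ordinary one raises it by exactly one, together with the degenerate edge cases (for instance $F(p)=0$ or $D_p=0$, where the height convention must be read as ``$\le 0$''). These degenerate cases only ever drive a height below $n$, so they are harmless for the extremal argument, but they should be checked so that the two nontrivial height formulas above are invoked only when $D_p\neq 0$.
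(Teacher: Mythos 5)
Your proof is correct and follows essentially the same route as the paper: an extremal argument on the set of objects with height at least $n$, combined with the three-case bookkeeping showing that the construction of $\NR F(p)$ raises the height of $\lim_{\CP_{<p}}\NR F$ by at most one. The only (cosmetic) differences are that you argue directly with a minimal-\emph{degree} witness rather than by contradiction with a minimal element of $\{s : \alt(\NR F(s))>n\}$, which has the small advantage of delivering the ``furthermore'' clause from the same witness, whereas the paper dispatches it in a final remark.
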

\begin{proof} 
For brevity, set $\ell=\alt(\NR F)$. If $n=0$ or $n=\ell$, the result is trivial. Assume by contradiction that there exists $n\in \NN$ with $0< n<\ell$ such that $\alt(\NR F(p))\neq n$ for all $p\in \CP$. Let $Q$ be the subposet of $\CP$ given by:
\[
Q=\{q\in \CP\mid \alt(\NR F(q))>n\}.
\]
By hypothesis, $Q$ is non-empty because $\alt(\NR F)=\ell>n$, so there exists some $q\in \CP$ such that $\alt(\NR F(q))=\ell$. 

Now, let $q$ be a minimal object in $Q$, that is, ${\alt(\NR F(q))>n}$, and for every $t<q$, $\alt(\NR F(t))< n$. This implies that ${\alt(\lim_{\CP_{<q}}\NR F)<n}$ because
\[
\alt(\lim_{\CP_{<q}}\NR F) \le \max\{\alt(\NR F(t))\mid t<q\}<n.
\]
If $q$ is $F$-truncatable, we obtain the following contradiction:
\[ 
n<\alt(\NR F(q))=\alt(\cocyl_{\NT}(F(q)\to \lim_{\CP_{<q}}\NR F))=\alt(\lim_{\CP_{<q}}\NR F)<n.
\]
Thus, $q$ is not $F$-truncatable. But this also leads to the following contradiction:
\[
n<\alt(\NR F(q))=\alt(\cocyl(F(q)\to \lim_{\CP_{<q}}\NR F))=\alt(\lim_{\CP_{<q}}\NR F)+1 \le n.
\]
Thus, there exists at least one $p\in \CP$ such that $\alt(\NR F(p))=n$. Moreover, if such $p\in \CP$ is minimal satisfying $\alt(\NR F(p))=n$, then $\alt(\lim_{\CP_{<p}}\NR F)=n-1$.
\end{proof}
%%%%%%%%%%%%%%%%%%%%%%%%%%%%%%%%%%%%%%%%%%%%%%%%%

%%%%%%%%%%%%%%%%%%%%%%%%%%%%%%%%%%%%%%%%%%%%%%%%%
\begin{lem}
\label{prop_VB_inductive equivalences}
%%%%%%%%%%%%%%%%%%%%%%%%%%%%%%%%%%%%%%%%%%%%%%%%%
Let $\CP$ be a filtered poset, and $R$ be a unital commutative ring. Let ${F\colon \CP^\op\to R\Mod}$ be a functor, and $n$ be a positive integer. The following are equivalent:
\begin{enumerate}
\item $H^k(\CP_{< p};F)=0$, for every $p\in \CP$ and $k\ge n$.
\item $H^n(\CP_{< p};F)=0$, for every $p\in \CP$.
\item The fibrant replacement $\NR F$ has height $\alt(\NR F)\le n$.
\end{enumerate}
\end{lem}
\begin{proof}
(2) is a special case of (1), so (1)$\Rightarrow$(2). For (2)$\Rightarrow$(3), we proceed by contradiction. Assume that $\alt(\NR F)>n>0$. By Proposition~\ref{no jump property}, there exists an object $p\in \CP$ such that $\alt(\NR F(p))=n+1$ and $\alt(\lim_{\CP_{<p}}\NR F)=n$.
Then, by  the inductive construction of the fibrant replacement:
\[
H^n(\CP_{< p};F)=H^n(\lim_{\CP_{<p}}\NR F)=\coker(\partial_{n-1}\colon(\lim_{\CP_{<p}}\NR F)^{n-1}\to(\lim_{\CP_{<p}}\NR F)^n).
\]
However, $H^n(\CP_{< p};F)=0$ if and only if the differential $\partial_{n-1}$ is an epimorphism and this is equivalent to the fact that the morphism ${\varepsilon_p\colon F(p)\to \lim_{\CP_{<p}}\NR F}$ is cotruncatable. Thus, by definition,  $\NR F(p)=\cocyl_\NT(\varepsilon_p)$.
Therefore, $\alt(\NR F(p))=n\neq n+1$ which is a contradiction.

Finally, we must prove (3)$\Rightarrow$(1). Notice that (1) is equivalent to,
\begin{center}
for every $p\in \CP$, and $k\ge n$, $H^k(\lim_{\CP_{<p}}\NR F)=0$. 
\end{center}
So, fix $p\in \CP$. By hypothesis, $\alt(\NR F(p))\le n$, so we only need to prove that 
\[
{H^n(\lim_{\CP_{<p}}\NR F)=0}.
\]
There is no loss of generality in assuming that ${\alt(\lim_{\CP_{<p}}\NR  F)=n}$; otherwise, there is nothing to prove. In that case, the natural map $\varepsilon_p\colon F(p)\to \lim_{\CP_{<p}}\NR F$ must be cotruncatable, and hence the differential $\partial_{n-1}\colon(\lim_{\CP_{<p}}\NR F)^{n-1}\to(\lim_{\CP_{<p}}\NR F)^n$ is an epimorphism. Thus, 
\[
H^n(\CP_{< p};F)=H^n(\lim_{\CP_{<p}}\NR F)=\coker(\partial_{n-1})=0.\qedhere
\]
\end{proof}

\begin{proof}[Proof of \cref{Thm-inductive1}] 
Now, the proof follows by combining Lemma~\ref{prop_VB_inductive equivalences} and Corollary~\ref{bound_from_height}.
\end{proof}

If $\CP$ is a filtered poset of finite length, we can relax the hypothesis of \cref{Thm-inductive1} to obtain a weaker vanishing bound for the higher limits of $F$, requiring the local vanishing condition only for $p$ of certain consecutive degrees.
%%%%%%%%%%%%%%%%%%%%%%%%%%%%%%%%%%%%%%%%%%%%%%%%%
\begin{thm}
\label{Thm-inductive2}
%%%%%%%%%%%%%%%%%%%%%%%%%%%%%%%%%%%%%%%%%%%%%%%%%
Let $\CP$ be a filtered poset of finite length, $R$ be a unital commutative ring, and $F\colon \CP^\op\to R\Mod$ be a functor. If there exist $m\le \operatorname{length}(\CP)$ and $n\in \NN$ such that, for every $p\in \CP$ with ${d}(p)\le m$, we have:
\[
H^k(\CP_{<p};F)=0\quad \mbox{for }n\le k.
\] 
Then, $H^k(\CP;F)=0$ for $n+\operatorname{length}(\CP)-m<k$.
\end{thm}
\begin{proof}
Let $\NR' F\colon \CP^\op\to \Ch(R)$ be the functor defined by:
\[
\NR' F(p)=\begin{cases}
\NR F(p) &\text{if }{d}(p)\le m,\\
\cocyl(F(p)\to \lim_{\CP_{<p}}\NR' F)& \text{otherwise,}
\end{cases}
\]
which is a fibrant replacement of $F$ by Proposition~\ref{thm:model_category}, see Remark~\ref{rmk:fib-choice-cocyl}. By Lemma~\ref{prop_VB_inductive equivalences}, for every $p\in \CP$ with ${d}(p)\le m$, we have:
\[
\alt(\NR' F(p))\le n.
\]
Now, we prove by induction on $k\ge 0$ that, for every object $p\in \CP$ with ${d}(p)=m+k$, we have
\[
\alt(\NR' F(p))\le n+k.
\]
The basis case is already done. Now, assume the result is true for objects of degree $m+k-1$, and let $p\in \CP$ with ${d}(p)=m+k$. Then,
\begin{align*}
\alt(\NR' F(p))=&\alt(\cocyl(F(p)\to \lim_{\CP_{<p}}\NR' F))\\
\le& \alt(\lim_{\CP_{<p}}\NR' F)+1\le \max \{\alt(\NR' F(q))\mid q\le p\}+1\\
\le& n+k-1+1=n+k.
\end{align*}
Then, $\alt(\NR' F)\le n+\operatorname{length}(\CP)-m$ which implies the desired vanishing bound.
\end{proof}
Finally, we present the dual results for the higher colimits without proofs.

\begin{thm}
\label{Thm-inductive1-dual}
Let $\CP$ be a filtered poset and $R$ be a unital commutative ring. Let ${F\colon \CP\to R\Mod}$ be a functor. If there exists an integer $n\ge 0$ such that for every $p\in \CP$, $H_k(\CP_{< p};F)=0$ for all $k\ge n$, then
\[
H_k(\CP;F)=0
\]
for all $k>n$.
\end{thm}

%%%%%%%%%%%%%%%%%%%%%%%%%%%%%%%%%%%%%%%%%%%%%%%%%
\begin{thm}
\label{Thm-inductive2-dual}
%%%%%%%%%%%%%%%%%%%%%%%%%%%%%%%%%%%%%%%%%%%%%%%%%
Let $\CP$ be a filtered poset of finite length, $R$ be a unital commutative ring, and $F\colon \CP\to R\Mod$ be a functor. If there exist $m\le \operatorname{length}(\CP)$ and $n\in \NN$ such that, for every $p\in \CP$ with ${d}(p)\le m$, we have:
\[
H_k(\CP_{<p};F)=0\quad \mbox{for }n\le k.
\] 
Then, $H_k(\CP;F)=0$ for $n+\operatorname{length}(\CP)-m<k$.
\end{thm}

%%%%%%%%%%%%%%%%%%%%%%%%%%%%%%%%%%%%%%%%%%%%%%%%%
%%%%%%%%%%%%%%%%%%%%%%%%%%%%%%%%%%%%%%%%%%%%%%%%%
%%%%%%%%%%%%%%%%%%%%%%%%%%%%%%%%%%%%%%%%%%%%%%%%%
\subsection{Application to Mackey Functors for Posets}
\label{sec:weak_mackey_functors}
%%%%%%%%%%%%%%%%%%%%%%%%%%%%%%%%%%%%%%%%%%%%%%%%%
%%%%%%%%%%%%%%%%%%%%%%%%%%%%%%%%%%%%%%%%%%%%%%%%%
%%%%%%%%%%%%%%%%%%%%%%%%%%%%%%%%%%%%%%%%%%%%%%%%%
Weak Mackey functors for posets were introduced by the author in \cite{CD23}, modelled on the classical notion of Mackey functor for orbit categories \cite{MR1759612,Jackowski1992}. The central result of that paper is that every weak Mackey functor with quasi-unit is acyclic, i.e., its higher limits (and higher colimits) vanish. The present section pursues a quantitative refinement of this statement: rather than requiring the quasi-unit condition throughout the entire poset, we impose it only on a partial range of degrees and appeal to \cref{Thm-inductive2} and \cref{Thm-inductive2-dual} to obtain a non-trivial vanishing range for the higher limits. We first recall the relevant definitions before establishing the main result.

%%%%%%%%%%%%%%%%%%%%%%%%%%%%%
\begin{defn}
\label{def:F-linear}
%%%%%%%%%%%%%%%%%%%%%%%%%%%%%
Let $F\colon \CP \to R\Mod$ be a functor and $p\in \CP$. An endomorphism $\alpha\in \End_R(F(p))$ is \emph{$F$-linear} if for every $q\le p$ there exists $\beta\in \End_R(F(q))$ such that
\[
\alpha\circ F(q\le p)=F(q\le p)\circ \beta.
\]
If, moreover, $\alpha$ and $\beta$ are automorphisms, we call $\alpha$ an \emph{$F$-linear automorphism}. We denote by $\End^F_R(p)$ and $\Aut^F_R(p)$ the submonoid and subgroup of $F$-linear endomorphisms and automorphisms of $F(p)$, respectively.

Dually, for a contravariant functor $G\colon \CP^\op \to R\Mod$ and $p\in \CP$, an endomorphism $\alpha\in \End_R(G(p))$ is \emph{$G$-linear} if for every $q<p$ there exists $\beta\in \End_R(G(q))$ such that
\[
G(q<p)\circ\alpha = \beta \circ G(q<p).
\]
We denote the corresponding submonoid and subgroup by $\End^G_R(p)$ and $\Aut^G_R(p)$.
\end{defn}

\begin{defn}
\label{def:weak-mackey}
Let $\CP$ be a filtered poset.
\begin{enumerate}
\item A covariant functor $F\colon \CP\to R\Mod$ is a \emph{weak Mackey functor} if for every $q\le p$ in $\CP$ there exists a morphism $G(q\le p)\colon F(p)\to F(q)$ such that 
\[
G(q\le p)\circ F(q\le p)=\alpha(p,q)
\]
with $\alpha(p,q)\in \End^F_{R}(q)$, and, for every $k<p$ with $q\not\le k$,
\[
\im\bigl(G(q\le p)\circ F(k\le p)\bigr)\subseteq \im_F(q),
\]
where $\im_F(q)=\sum_{k<q}\im(F(k\le q))$.
\item A contravariant functor $G\colon \CP^\op\to R\Mod$ is a \emph{weak Mackey functor} if for every $q<p$ in $\CP$ there exists a morphism $F(q<p)\colon G(q)\to G(p)$ such that 
\[
G(q<p)\circ F(q<p)=\alpha(p,q)
\]
with $\alpha(p,q)\in \End^G_{R}(q)$, and, for every $k<p$ with $q\not< k$,
\[
\ker_G(k)\subseteq \ker\bigl(G(q<p)\circ F(k<p)\bigr),
\]
where $\ker_G(k)=\bigcap_{l<k}\ker(G(l<k))$.
\end{enumerate}
Given a subposet $\CQ\subseteq \CP$, we say that $F$ (resp.\ $G$) has a \emph{quasi-unit in $\CQ$} if $\alpha(p,q)\in \Aut^F_{R}(q)$ (resp.\ $\alpha(p,q)\in \Aut^G_{R}(q)$) for all $q<p$ with $p,q\in \CQ$. When $\CQ=\CP$ we simply say that $F$ (resp.\ $G$) has a \emph{quasi-unit}.
\end{defn}

It is proved in \cite[Theorem C, C*]{CD23} that a weak Mackey functor with quasi-unit is $\lim$-acyclic (resp.\ $\colim$-acyclic). The following result provides a partial version of this statement, replacing the global quasi-unit condition by a degree-wise one.
%%%%%%%%%%%%%%%%%%%%%%%%%%%%%
\begin{thm}
\label{thm:VB_inductive_weak_mackey}
%%%%%%%%%%%%%%%%%%%%%%%%%%%%%
Let $\CP$ be a filtered poset of length $\ell$ and $R$ be a unital commutative ring. Let $m\le \ell$ be a non-negative integer.
\begin{enumerate}
\item If $F\colon \CP\to R\Mod$ is a weak Mackey functor that has a quasi-unit in $\CP_{\le p}$ for every $p\in \CP$ with $d(p)\le m$, then $H_k(\CP;F)=0$ for all $k>\ell-m$.
\item If $G\colon \CP^\op\to R\Mod$ is a weak Mackey functor that has a quasi-unit in $\CP_{\le p}$ for every $p\in \CP$ with $d(p)\le m$, then $H^k(\CP;G)=0$ for all $k>\ell-m$.
\end{enumerate}
\end{thm}
\begin{proof}
We prove (1); the proof of (2) is analogous. For every $p\in \CP$ with $d(p)\le m$, the restriction $F\vert_{\CP_{\le p}}$ is a weak Mackey functor with a quasi-unit in $\CP_{\le p}$. By \cite[Lemma~3.4]{CD23}, $F\vert_{\CP_{\le p}}$ is cofibrant, which gives $H_k(\CP_{<q};F)=0$ for $k\ge 1$ and every $q\in \CP$ with $d(q)\le m+1$. The conclusion then follows from \cref{Thm-inductive2}.
\end{proof}

\begin{rmk}
	\label{rmk:weak-mackey-local}
Notice that this result is a refinement of \cite[Theorem 5.3]{CD-shell}: the poset here is not required to be CL-shellable. Moreover,
the proof of \cref{thm:VB_inductive_weak_mackey} applies \cite[Lemma~3.4]{CD23} independently to each restriction $F\vert_{\CP_{\le p}}$ for $d(p)\le m$, using no global coherence between different sub-posets. Consequently, the hypothesis can be weakened: it suffices to assume that, for every $p\in\CP$ with $d(p)\le m$, the restriction $F\vert_{\CP_{\le p}}$ admits a weak Mackey structure with quasi-unit, without requiring any kind of compatibility between the different weak Mackey structures for different $p$'s.
\end{rmk}
%%%%%%%%%%%%%%%%%%%%%%%%%%%%%%%%%%%%%%%%%%%%%%%%%
%%%%%%%%%%%%%%%%%%%%%%%%%%%%%%%%%%%%%%%%%%%%%%%%%
%%%%%%%%%%%%%%%%%%%%%%%%%%%%%%%%%%%%%%%%%%%%%%%%%
\subsection{Application to Knot Theory}
%%%%%%%%%%%%%%%%%%%%%%%%%%%%%%%%%%%%%%%%%%%%%%%%%
%%%%%%%%%%%%%%%%%%%%%%%%%%%%%%%%%%%%%%%%%%%%%%%%%
%%%%%%%%%%%%%%%%%%%%%%%%%%%%%%%%%%%%%%%%%%%%%%%%%
As an application, we explicitly show how \cref{Thm-inductive1} can be used to provide vanishing bounds for the unnormalised Khovanov homology $\overline{KH}^*(L)$ of a link $L$.

\begin{defn}
A \emph{link} is an embedding $L\colon \bigsqcup_{i=1}^r S^1 \hookrightarrow S^3$ of a disjoint union of circles into the 3-sphere, considered up to ambient isotopy. A link with a single component is called a \emph{knot}. We identify the link with its image in $S^3$.

Let $L\subset S^3$ be a link. A \emph{planar diagram} $D$ with $n$ crossings of $L$ is a generic immersion of the underlying circles in $\RR^2$, with finitely many transverse double points (the \emph{crossings}), decorated with over/under-crossing information. 
\end{defn}

\begin{figure}[h!]
\centering
\begin{tikzpicture}[scale=0.7, line width=1.2pt]

\begin{scope}[xshift=-3.5cm]
\draw[domain=0:360, samples=120, smooth, line width=1.2pt] 
plot ({sin(\x)+2*sin(2*\x)}, {cos(\x)-2*cos(2*\x)+0.5});
\node[font=\small] at (0, -3) {Knot (immersion)};
\end{scope}

\begin{scope}[xshift=3.5cm]
\tikzset{strand/.style={preaction={draw, white, line width=3pt}, line width=1pt, smooth, samples=50}}
\draw[domain=60:120, strand] plot ({sin(\x)+2*sin(2*\x)}, {cos(\x)-2*cos(2*\x)+0.5});
\draw[domain=180:300, strand] plot ({sin(\x)+2*sin(2*\x)}, {cos(\x)-2*cos(2*\x)+0.5});
\draw[domain=-60:60, strand] plot ({sin(\x)+2*sin(2*\x)}, {cos(\x)-2*cos(2*\x)+0.5});
\draw[domain=120:180, strand] plot ({sin(\x)+2*sin(2*\x)}, {cos(\x)-2*cos(2*\x)+0.5});

\node[font=\small] at (0,    2.5) {$1$};
\node[font=\small] at (-1.7, -0.4) {$2$};
\node[font=\small] at ( 1.7, -0.4) {$3$};

\node[font=\small] at (0, -3) {Planar Diagram $D$};
\end{scope}
\end{tikzpicture}
\caption{A generic immersion of the trefoil knot (left) and its planar diagram $D$ decorated with over/under-crossings (right).}
\label{fig:trefoil}
\end{figure}

Given a crossing of $D$, a \emph{smoothing} of $D$ is a diagram obtained by locally replacing the crossing with a pair of non-crossing arcs: the \emph{$0$-smoothing} connects the strands horizontally and the \emph{$1$-smoothing} connects them vertically, with the orientation described below in \cref{fig:smoothings}.

\begin{figure}[h!]
\centering
\begin{tikzpicture}[scale=0.7, line width=1.2pt]
\begin{scope}[xshift=0cm]
\draw (-0.8, 0.8) -- ( 0.8,-0.8);
\draw (-0.8,-0.8) -- (-0.15, -0.15);
\draw ( 0.15, 0.15) -- ( 0.8, 0.8);
\node at (0,-1.3) {crossing};
\end{scope}
\begin{scope}[xshift=4cm]
\draw (-0.8, 0.8) .. controls (-0.2, 0.2) and (-0.2,-0.2) .. (-0.8,-0.8);
\draw ( 0.8, 0.8) .. controls ( 0.2, 0.2) and ( 0.2,-0.2) .. ( 0.8,-0.8);
\node at (0,-1.3) {$1$-smoothing};
\end{scope}
\begin{scope}[xshift=-4cm]
\draw (-0.8, 0.8) .. controls (-0.2, 0.2) and ( 0.2, 0.2) .. ( 0.8, 0.8);
\draw (-0.8,-0.8) .. controls (-0.2,-0.2) and ( 0.2,-0.2) .. ( 0.8,-0.8);
\node at (0,-1.3) {$0$-smoothing};
\end{scope}
\draw[->, line width=0.8pt] (1.1,0) -- (2.7,0) node[midway, above] {\small $1$};
\begin{scope}[xshift=-4cm]
\draw[<-, line width=0.8pt] (1.1,0) -- (2.7,0) node[midway, above] {\small $0$};
\end{scope}
\end{tikzpicture}
\caption{}
\label{fig:smoothings}
\end{figure}

A \emph{complete smoothing} of $D$ is a choice of a smoothing at each crossing. The set of complete smoothings is indexed by $\{0,1\}^n$, and each complete smoothing $v\in\{0,1\}^n$ yields a planar diagram $D_v$ consisting of a disjoint union of circles. We denote by $|D_v|$ the number of circles in $D_v$.

The Boolean poset $\CP= \{0,1\}^n$ is a filtered poset of length $n$ with the filtration ${d(v) = \sum_{i=1}^n v_i}$. The \emph{Khovanov cube} is the contravariant functor
\[
F_{KH}\colon \CP^\op \to \Ab
\]
constructed as follows. Let $V=\ZZ[x]/(x^2)$ be the free $\ZZ$-module (actually a Frobenius algebra). To each vertex $v\in\CP$ one assigns the abelian group
\[
F_{KH}(v) = V^{\otimes |D_v|}.
\]
For a cover relation $w\prec v$ in $\CP^\op$, $v$ is obtained from $w$ by changing a single $0$ to a $1$ at position $k$. Thus, the morphism
\[
F_{KH}(w\prec v)\colon F_{KH}(v)\to F_{KH}(w)
\]
is induced by:
\begin{itemize}
\item the \emph{multiplication} $m\colon V\otimes V\to V$, $m(1\otimes x)=m(x\otimes 1)=x$; ${m(1\otimes 1)=1}$; $m(x\otimes x)=0$, when the $k$-th smoothing merges two circles of $D_v$ into one circle of $D_w$,
\item the \emph{comultiplication} $\Delta\colon V\to V\otimes V$, $\Delta(1)=1\otimes x+x\otimes 1$; $\Delta(x)=x\otimes x$, when the $k$-th smoothing splits one circle of $D_v$ into two circles of $D_w$.
\end{itemize}
This makes $F_{KH}$ a well-defined functor; see \cite[Section 1]{EverittKHov}.

\begin{rmk}
Traditionally, the Khovanov cube incorporates signs into the edge maps to ensure anti-commutativity, yielding a cochain complex that computes Khovanov homology, see \cite{MR1740682}. Moreover, the Frobenius algebra $V$ typically carries an internal quantum grading $q$. However, our framework relies on computing higher limits over posets, which requires the Khovanov cube to be strictly commutative. Furthermore, we disregard the $q$-grading and treat Khovanov homology as a singly-graded theory.
\end{rmk}

Denote by $\widehat{\CP}$ the poset obtained by adding an element $\hat{1}$ that is greater than every other element of $\CP$ except the maximum $\mathbf{1}=(1,\dots,1)$. We also denote by the same letter, ${F_{KH}}$, the extension by zero of $F_{KH}$ to $\widehat{\CP}$. That is, we set ${F_{KH}}(\hat{1})=0$ and the unique morphism $F_{KH}(\hat 1)\to F_{KH}(p)$ for any $p\in \CP$, see \cref{figure:khovanov_cube_trefoil}. Everitt and Turner prove that the higher limits of ${F_{KH}}$ in $\widehat{\CP}$ compute the unnormalised Khovanov homology of $L$; see \cite[Theorem~1.4]{EverittKHov}:
\begin{equation}\label{eq:iso_ET-kh}
\overline{KH}^i(L)=H^i\!\left(\widehat{\CP};{F_{KH}}\right).
\end{equation}

\begin{figure}[h!]
\centering
\begin{tikzpicture}[>=stealth, scale=0.8, transform shape]
\tikzstyle{box} = [draw, rectangle, minimum width=3cm, minimum height=2.2cm, fill=white]
\newcommand{\khstate}[3]{%
\begin{tikzpicture}[scale=.9, baseline=-3pt, line cap=round, line join=round]
\draw (110:0.4) arc (110:190:0.4); \draw (110:1.1) arc (110:190:1.1);
\draw (230:0.4) arc (230:310:0.4); \draw (230:1.1) arc (230:310:1.1);
\draw (350:0.4) arc (-10:70:0.4); \draw (350:1.1) arc (-10:70:1.1);

\ifnum#1=0
\draw (110:0.4) .. controls (90:0.8) .. (70:0.4);
\draw (110:1.1) .. controls (90:0.7) .. (70:1.1);
\else
\draw (110:0.4) .. controls (100:0.75) .. (110:1.1);
\draw (70:0.4) .. controls (80:0.75) .. (70:1.1);
\fi
\ifnum#2=0
\draw (230:0.4) .. controls (210:0.8) .. (190:0.4);
\draw (230:1.1) .. controls (210:0.7) .. (190:1.1);
\else
\draw (230:0.4) .. controls (220:0.75) .. (230:1.1);
\draw (190:0.4) .. controls (200:0.75) .. (190:1.1);
\fi
\ifnum#3=0
\draw (350:0.4) .. controls (330:0.8) .. (310:0.4);
\draw (350:1.1) .. controls (330:0.7) .. (310:1.1);
\else
\draw (350:0.4) .. controls (340:0.75) .. (350:1.1);
\draw (310:0.4) .. controls (320:0.75) .. (310:1.1);
\fi
\end{tikzpicture}
}

\def\knode#1#2#3#4#5#6{
\node[box] (#1) at (#2,#3) {};
\node[anchor=north east, inner sep=2pt, font=\scriptsize] at (#1.north east) {$#5$};
\node[anchor=south east, inner sep=2pt, font=\scriptsize] at (#1.south east) {#6};
\node[anchor=center, xshift=-10pt] at (#1.center) {#4};
}

\node at (-1.5, 3.5) {
\begin{tikzpicture}[scale=0.4]
\tikzset{strand/.style={preaction={draw, white, line width=3pt}, line width=1pt, smooth, samples=50}}
\draw[domain=60:120, strand] plot ({sin(\x)+2*sin(2*\x)}, {cos(\x)-2*cos(2*\x)+0.5});
\draw[domain=180:300, strand] plot ({sin(\x)+2*sin(2*\x)}, {cos(\x)-2*cos(2*\x)+0.5});
\draw[domain=-60:60, strand] plot ({sin(\x)+2*sin(2*\x)}, {cos(\x)-2*cos(2*\x)+0.5});
\draw[domain=120:180, strand] plot ({sin(\x)+2*sin(2*\x)}, {cos(\x)-2*cos(2*\x)+0.5});
\node at (0, 2.7) {\Large 1};
\node at (-1.8, -0.5) {\Large 2};
\node at (1.8, -0.5) {\Large 3};
\end{tikzpicture}
};

\knode{N000}{0}{0}{\khstate{0}{0}{0}}{V^{\otimes 2}}{000}

\knode{N100}{4.5}{3}{\khstate{1}{0}{0}}{V^{\phantom{\otimes 2}}}{100}
\knode{N010}{4.5}{0}{\khstate{0}{1}{0}}{V^{\phantom{\otimes 2}}}{010}
\knode{N001}{4.5}{-3}{\khstate{0}{0}{1}}{V^{\phantom{\otimes 2}}}{001}

\knode{N110}{9}{3}{\khstate{1}{1}{0}}{V^{\otimes 2}}{110}
\knode{N101}{9}{0}{\khstate{1}{0}{1}}{V^{\otimes 2}}{101}
\knode{N011}{9}{-3}{\khstate{0}{1}{1}}{V^{\otimes 2}}{011}

\knode{N111}{13.5}{-1.5}{\khstate{1}{1}{1}}{V^{\otimes 3}}{111}

\knode{N111hat}{13.5}{1.5}{}{0}{$\hat{1}$}

\tikzstyle{arr} = [->, thick]

\draw[<-, thick] (N000) -- (N100);
\draw[<-, thick] (N000) -- (N010);
\draw[<-, thick] (N000) -- (N001);

\draw[<-, thick] (N100) -- (N110);
\draw[<-, thick] (N100) -- (N101);
\draw[<-, thick] (N010) -- (N110);
\draw[<-, thick] (N010) -- (N011);
\draw[<-, thick] (N001) -- (N101);
\draw[<-, thick] (N001) -- (N011);

\draw[<-, thick] (N110) -- (N111);
\draw[<-, thick] (N101) -- (N111);
\draw[<-, thick] (N011) -- (N111);

\draw[<-, thick] (N110) -- (N111hat);
\draw[<-, thick] (N101) -- (N111hat);
\draw[<-, thick] (N011) -- (N111hat);

\end{tikzpicture}
\caption{The Khovanov cube for the trefoil $L$.}\label{figure:khovanov_cube_trefoil}
\end{figure}

However, they compute these higher limits using a cellular approach as shown in \cite{EverittCel}. Such a global combinatorial method obscures the inductive nature of the link smoothings. Our inductive framework naturally captures this local simplification process to produce vanishing bounds for the Khovanov homology.

\begin{rmk}
Consider an element $p\in \CP$. The lower subposet $\CP_{\le p}$ consists of all states where the crossings associated with the $0$-coordinates of $p$ are permanently fixed to their $0$-smoothing. In other words, $\CP_{\le p}$ is naturally isomorphic to the Boolean poset of a simplified diagram $D'$, obtained from $D$ by $0$-smoothing all those fixed crossings. And, by definition of the Khovanov cube, the restriction of the functor $F_{KH}\vert_{\CP_{\le p}}$ coincides with the Khovanov cube of $D'$. This formal self-similarity is the underlying mechanism that enables our inductive bounds.
\end{rmk}

To illustrate \cref{Thm-inductive1} applied to Khovanov homology, we first need the following lemmas.

\begin{lem}
\label{lem:KH-auxiliar-PB}
Let $\CP$ be the Boolean poset $\{0,1\}^n$ and $F\colon \widehat{\CP}^\op\to \Ch(R)$ be a functor. Then, the limit $\lim_{\widehat{\CP}} F$ coincides with the pullback of the following diagram:
\begin{equation}\label{eq:pull-khov}
\begin{tikzcd}
&  F(\hat{1})\arrow[d] \\
F(\mathbf{1})\arrow[r] & \displaystyle\slim_{\CP_{<\mathbf{1}}} F.
\end{tikzcd}
\end{equation}
\end{lem}
\begin{proof}
Let us denote by $C$ the pullback of \eqref{eq:pull-khov}. A direct computation shows that the morphism $\lim_{\widehat{\CP}} F\to C$ defined by 
\[
(f_i)_{i\in \widehat{\CP}}\mapsto (f_{\mathbf{1}}, f_{\widehat{1}},\operatorname{Res}^{\widehat{\CP}}_{\CP_{<\mathbf{1}}}((f_i)_{i\in \widehat{\CP}}))
\]
is an isomorphism, where $\operatorname{Res}^{\widehat{\CP}}_{\CP_{<\mathbf{1}}}$  denotes the restriction morphism. 
\end{proof}
%%%%%%%%%%%%%%%%%%%%%%%%%%%%%%%%%%%
\begin{lem}
\label{lem:vanishing-bound-khovanov}
%%%%%%%%%%%%%%%%%%%%%%%%%%%%%%%%%%%
Let $D$ be a diagram of a link $L$ and $F_{KH}\colon \widehat{\CP}^\op\to \Ch(R)$ be the Khovanov cube. Let $N$ be the maximum of the heights of $\NR F_{KH}$ in $\CP_{< \mathbf{1}}$. If the $(N+1)$-th Khovanov homology group of $L$ vanishes, i.e.\ $\overline{KH}^{N+1}(L)=0$, then 
\[
H^{N}(\CP_{<\mathbf{1}};F_{KH})=0.
\]
\end{lem}
\begin{proof}
Let $\NR' F_{KH}\colon \CP^\op\to \Ch(\Ab)$ be the fibrant replacement of $F_{KH}$ as in Definition~\ref{def_fibrant_rep} in $\CP_{<\mathbf{1}}$, and defined in $\mathbf{1}$ and $\hat 1$ as the mapping cocylinder of $\varepsilon_{\mathbf{1}}$ and $\varepsilon_{\hat 1}$ respectively, see Remark~\ref{rmk:fib-choice-cocyl}. 

By \eqref{eq:iso_ET-kh} and \cref{hig_lim_fibrant_rep}(1), we have:
\[
\overline{KH}^{N+1}(L)=H^{N+1}({\lim}_{\widehat{\CP}} \NR' {F_{KH}}).
\]
Next, by Lemma~\ref{lem:KH-auxiliar-PB}, we have that $\lim_{\widehat{\CP}} \NR' {F_{KH}}$ is the pullback of the diagram \eqref{eq:pull-khov}, and by hypothesis, $\alt(\lim_{\CP_{<\mathbf{1}}}\NR' F_{KH})=N$, so  we have
\[
\begin{tikzcd}[row sep= small]
\displaystyle\slim_{\widehat{\CP}} {\NR' F_{KH}}^{N}\arrow[r, "\partial"] \arrow[d, Rightarrow,no head]                                                                                                & \displaystyle\slim_{\widehat{\CP}} \NR' F_{KH}^{N+1}  \arrow[d, Rightarrow,no head]                          \\
\displaystyle\slim_{\CP_{<\mathbf{1}}}{\NR' F_{KH}}^{N-1}\oplus \slim_{\CP_{<\mathbf{1}}}{\NR' F_{KH}}^{N}\oplus \slim_{\CP_{<\mathbf{1}}}{\NR' F_{KH}}^{N-1} \arrow[r, "\partial"] & \displaystyle\slim_{\CP_{<\mathbf{1}}}\NR' F_{KH}^{N}\oplus \slim_{\CP_{<\mathbf{1}}}{\NR' F_{KH}}^{N} \\
{(x_1,x_2,x_3)} \arrow[r, maps to]                                                                                                      & {(x_2-\overline\partial x_1,x_2-\overline\partial x_3)}                               
\end{tikzcd}
\]
where $\overline\partial$ is the corresponding differential of the cochain complex $\lim_{\CP_{<1}}{\NR' F_{KH}}$.
Since $\overline{KH}^{N+1}(L)=0$, the differential $\partial$ is surjective. This implies that $\overline\partial$ is surjective, and thus 
\[
H^N(\CP_{<\mathbf{1}};F_{KH})=H^N\left(\lim_{\CP_{<\mathbf{1}}}\NR F_{KH}\right)=0.
\]
\end{proof}

Although the Khovanov homology of the knot $8_{19}$, also known as $T(3,4)$, is well-known, we show how Theorem~\ref{Thm-inductive1} produces a sharper bound than the classical one given by the crossing number.

\begin{figure}[h!]
\centering
\begin{tikzpicture}
\node[anchor=south west,inner sep=0] (image) at (0,0) {\includegraphics[width=0.35\textwidth]{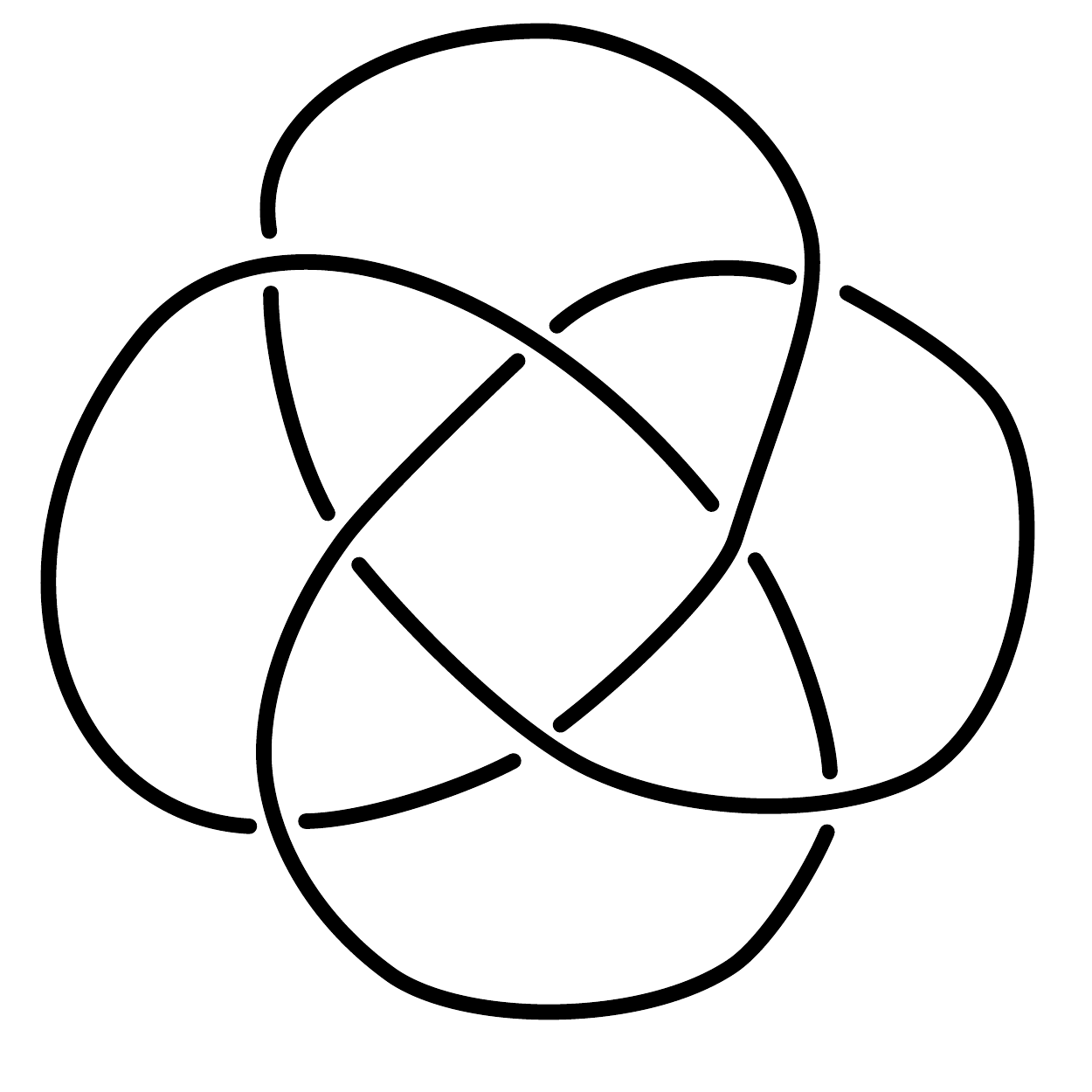}};
\begin{scope}[x={(image.south east)},y={(image.north west)}]
\node at (0.2, 0.8) {$1$};
\node at (0.8, 0.8) {$2$};
\node at (0.81, 0.2) {$3$};
\node at (0.2, 0.2) {$4$};
\node at (0.2, 0.5) {$5$};
\node at (0.48, 0.75) {$6$};
\node at (0.75, 0.5) {$7$};
\node at (0.49, 0.25) {$8$};
\end{scope}
\end{tikzpicture}
\caption{Diagram of the torus knot $8_{19}$, also known as $T(3,4)$.}
\label{fig:knot8_19}
\end{figure}
Let $D$ be the diagram of $8_{19}$ as shown in Figure~\ref{fig:knot8_19}, and let $F_{KH}\colon\CP^\op\to \Ab$ be the Khovanov cube of $D$.
We prove by induction that $H^k(\CP_{<p};F_{KH})=0$ for all $k\ge 6$ and for all $p\in \widehat\CP$. 

For $p\in \CP$ with $d(p)\le 6$, trivially, $H^k(\CP_{<p};F_{KH})=0$ for all $k\ge 6$. So we need to prove it for $d(p)=7$ and $8$.

\begin{figure}[h!]
\centering
\begin{tikzpicture}
\node[anchor=south west,inner sep=0] (image) at (0,0) {\includegraphics[width=0.35\textwidth]{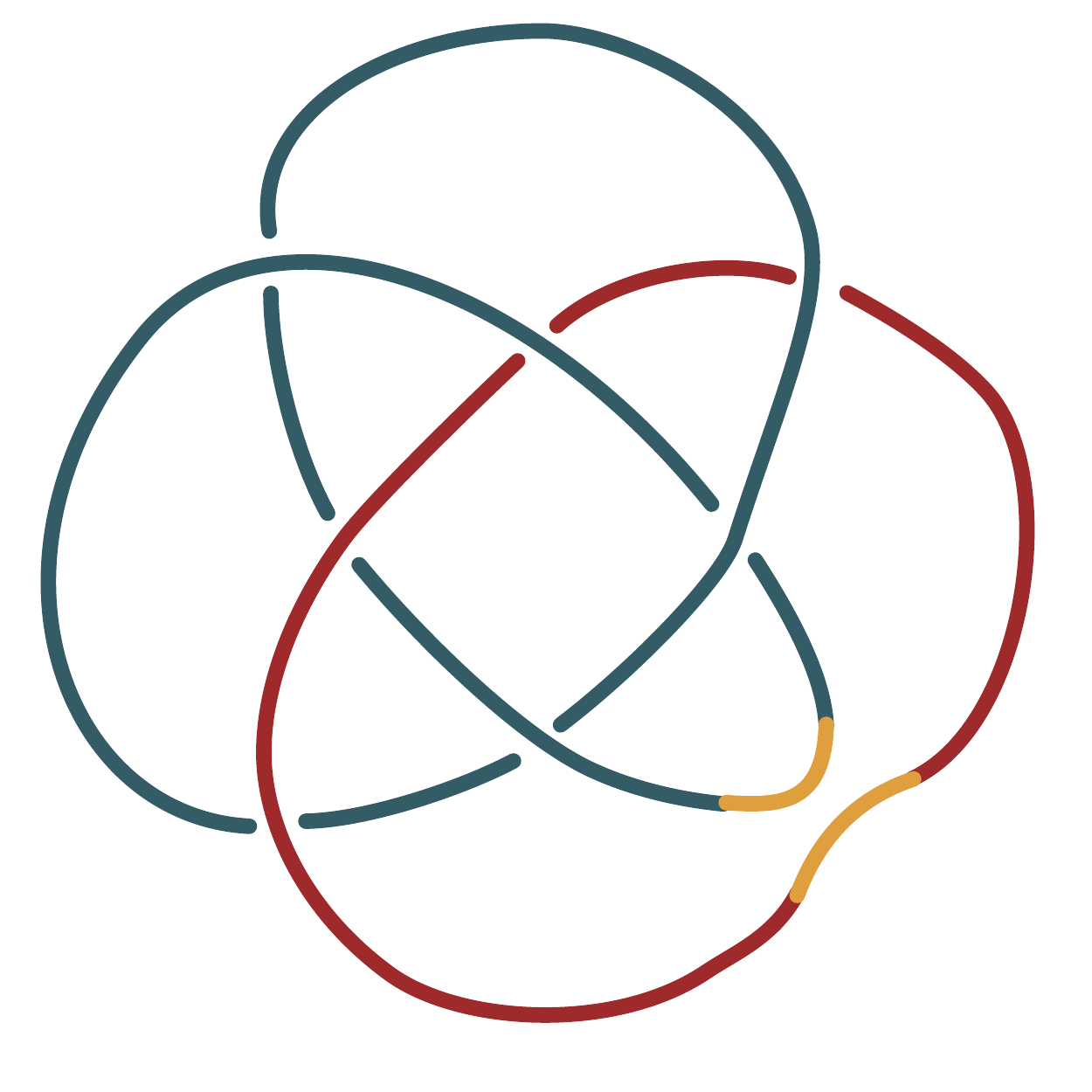}};
\begin{scope}[x={(image.south east)},y={(image.north west)}]
\node at (0.2, 0.8) {$1$};
\node at (0.8, 0.8) {$2$};
\node at (0.82, 0.18) {$3$};
\node at (0.2, 0.2) {$4$};
\node at (0.2, 0.5) {$5$};
\node at (0.48, 0.75) {$6$};
\node at (0.75, 0.5) {$7$};
\node at (0.49, 0.25) {$8$};
\end{scope}
\end{tikzpicture}
\begin{tikzpicture}
\node[anchor=south west,inner sep=0] (image) at (0,0) {\includegraphics[width=0.35\textwidth]{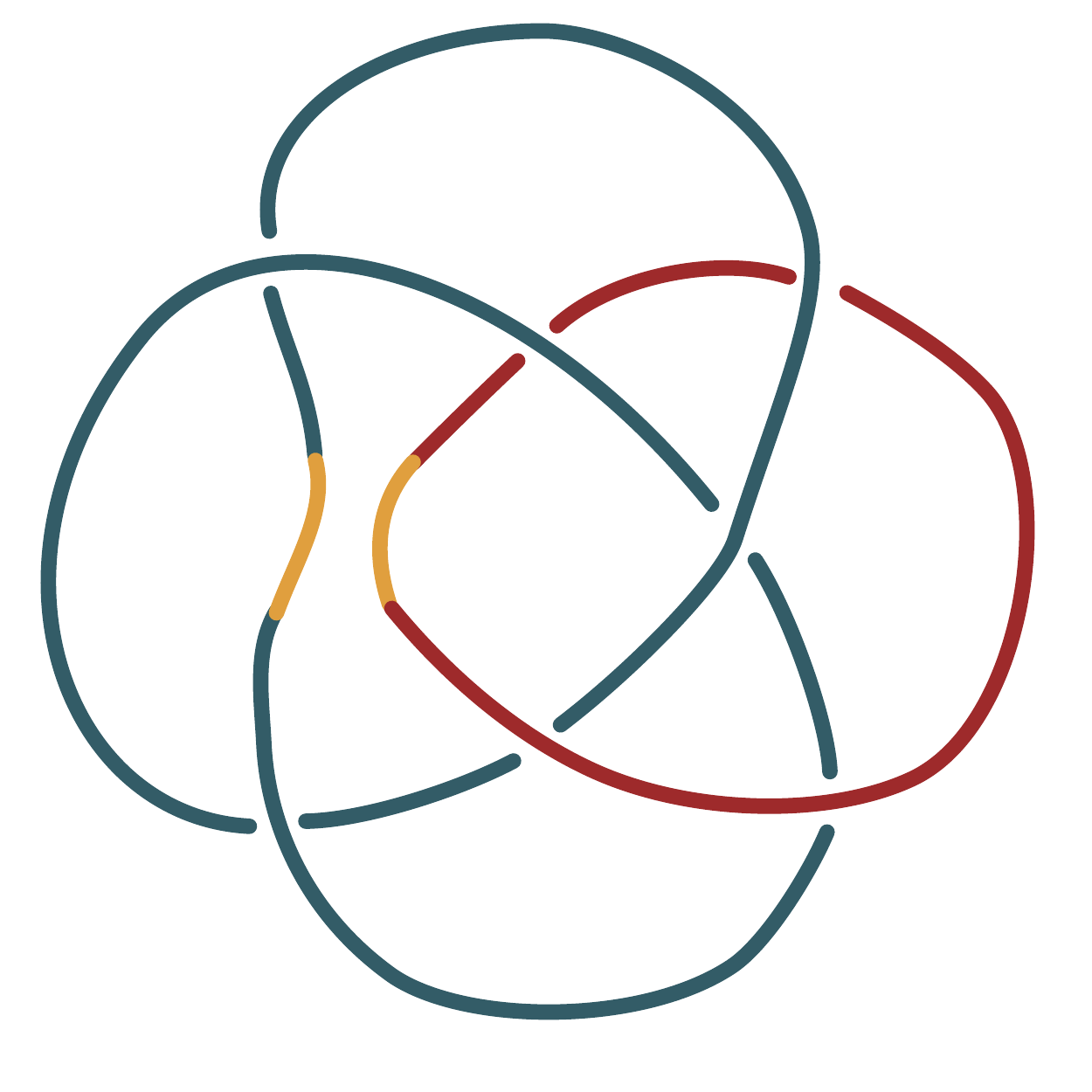}};
\begin{scope}[x={(image.south east)},y={(image.north west)}]
\node at (0.2, 0.8) {$1$};
\node at (0.8, 0.8) {$2$};
\node at (0.82, 0.18) {$3$};
\node at (0.2, 0.2) {$4$};
\node at (0.2, 0.5) {$5$};
\node at (0.48, 0.75) {$6$};
\node at (0.75, 0.5) {$7$};
\node at (0.49, 0.25) {$8$};
\end{scope}
\end{tikzpicture}
\caption{Two examples of $0$-smoothings of a crossing of $D$.}\label{fig_0-smoothings}
\end{figure}
The symmetry of the diagram $D$ implies that every $0$-smoothing of a crossing of $D$ yields an equivalent diagram, see Figure~\ref{fig_0-smoothings}. Thus, different choices of $p$ with ${d(p)=7}$ yield equivalent Khovanov cubes, $F_{KH}\vert_{\CP_{\le p}}$. Fix $p\in \CP$ with $d(p)=7$, to prove $\alt(\NR F_{KH}(p))\le 5$, it is enough to prove that $\alt(\NR F_{KH}(q))\le 5$ for all $q\le p$ with $d(q)=6$. By construction, $\alt(\NR F_{KH}\vert_{\CP_{\le q}})\le 5$; if the strict inequality holds the result is obvious, so assume that $\alt(\NR F_{KH}\vert_{\CP_{\le q}})=5$. Then, $F_{KH}\vert_{\CP_{\le q}}$ is the Khovanov cube of either $L6n1$ or the mirror image of $5_1$, see Figure~\ref{fig:00-smoothings}. In any case, we have:
\[
\overline{KH}^k(L6n1)=\overline{KH}^k(5_1)=0 \text{ for }k>5.
\]
By Lemma~\ref{lem:vanishing-bound-khovanov}, we have $H^5(\CP_{<q};\NR F_{KH})=0$. We apply Lemma~\ref{prop_VB_inductive equivalences} to $F_{KH}\vert_{\CP_{\le q}}$, which implies that $\alt(\NR F_{KH}(q))=5$. 

\begin{figure}[h!]
\centering
\begin{tikzpicture}
\node[anchor=south west,inner sep=0] (image) at (0,0) {\includegraphics[width=0.75\textwidth]{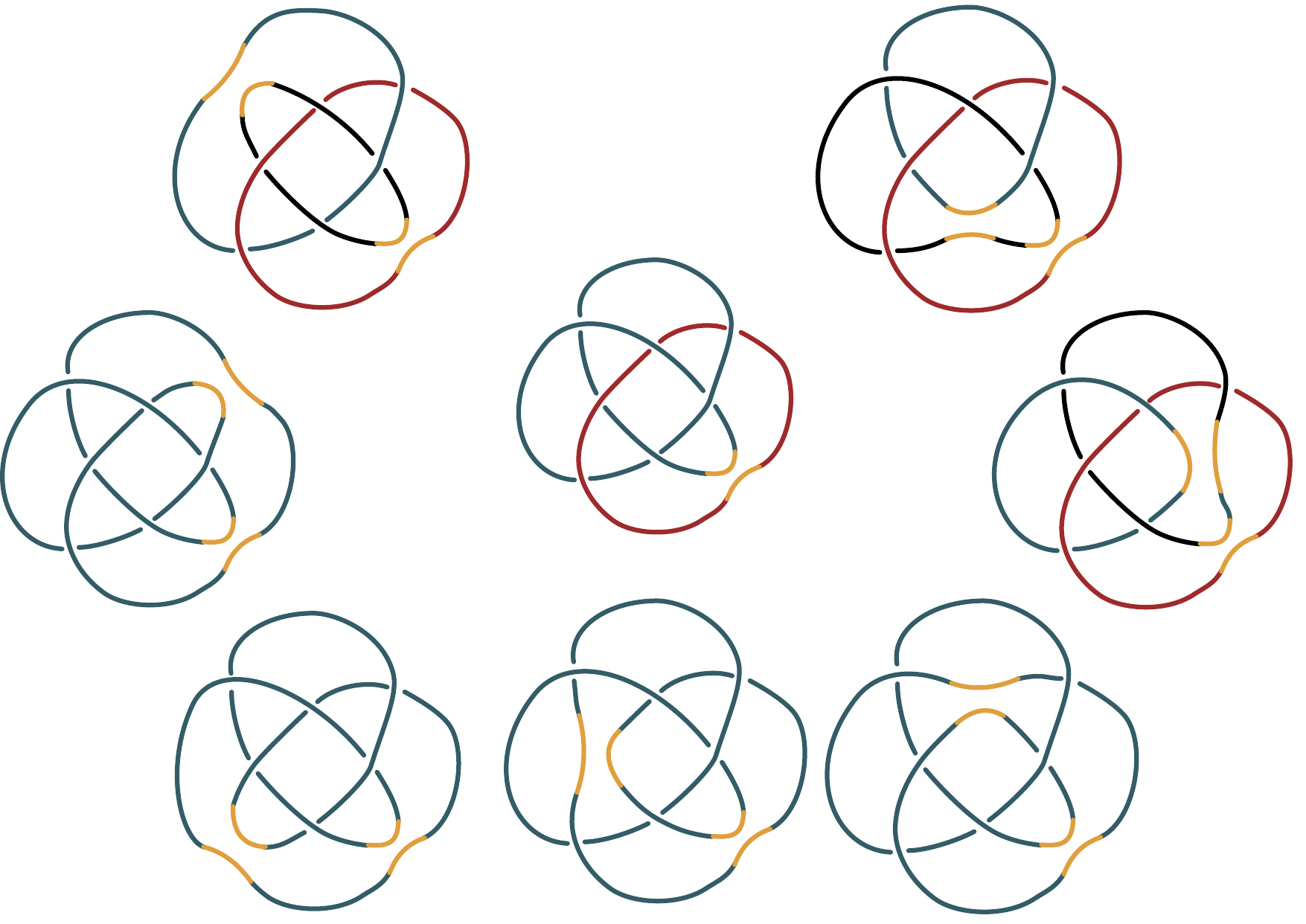}};
\begin{scope}[x={(image.south east)},y={(image.north west)}]

\coordinate (center) at ([shift={(0,0.08)}]image.center);
\coordinate (a) at ([shift={(-0.15,0)}]image.north east);
\coordinate (b) at ([shift={(0,-0.05)}]image.east);
\coordinate (c) at ([shift={(-0.1,0)}]image.south east);
\coordinate (d) at (image.south);
\coordinate (e) at ([shift={(0.1,0)}]image.south west);
\coordinate (f) at ([shift={(0,-0.05)}]image.west);
\coordinate (g) at ([shift={(0.15,0)}]image.north west);

\draw[ultra thick, ->, shorten <=15mm, shorten >=35mm] (center) -- (a);
\draw[ultra thick, ->, shorten <=15mm, shorten >=30mm] (center) -- (b);
\draw[ultra thick, ->, shorten <=15mm, shorten >=37mm] (center) -- (c);
\draw[ultra thick, ->, shorten <=15mm, shorten >=31mm] (center) -- (d);
\draw[ultra thick, ->, shorten <=15mm, shorten >=38mm] (center) -- (e);
\draw[ultra thick, ->, shorten <=15mm, shorten >=30mm] (center) -- (f);
\draw[ultra thick, ->, shorten <=15mm, shorten >=35mm] (center) -- (g);
\end{scope}
\end{tikzpicture}
\caption{All $0$-smoothings of $D'$.}
\label{fig:00-smoothings}
\end{figure}

Finally, for $d(p)=8$, we have to analyze the objects $q<p$ of degree $d(q)=7$. By the same symmetry argument above, the Khovanov cube restricted to $\CP_{\le q}$ corresponds to the link $L7n1$, and $\overline{KH}^6(L7n1)=0$. By Lemma~\ref{lem:vanishing-bound-khovanov}, we have $H^5(\CP_{<p};\NR F_{KH})=0$, and this implies that $\alt(\NR F_{KH}(p))= 5$. Thus, we conclude that for every $p\in \widehat\CP$,
\[
H^k(\CP_{<p};F_{KH})=0\text{ for }k\ge6.
\]
By Theorem~\ref{Thm-inductive1}, 
\[
\overline{KH}^k(L;\ZZ)\cong H^k(\widehat\CP;F_{KH})=0\text{ for }k>6.
\]

\begin{rmk}
Our inductive argument successfully shows the vanishing of the Khovanov homology of $8_{19}$ for all $k > 6$, which already strictly improves upon the classical crossing-number threshold. Although it is known that $\overline{KH}^6(8_{19})=0$, lowering our bound to capture this final degree would require exhaustive computations of the fibrant replacements. Furthermore, notice that $\CP$ is CL-shellable, and for $8_{19}$ the Khovanov cube $F_{KH}$ satisfies the stability property defined in \cite{CD-shell} in certain degrees. Therefore, these two techniques could be combined to develop new computational tools for Khovanov homology. The author intends to explore this further in future work.
\end{rmk}

\newcommand{\etalchar}[1]{$^{#1}$}


\begin{thebibliography}{BDGC{\etalchar{+}}22}

\bibitem[Bal21]{Balchin2021}
S.~Balchin.
\newblock {\em A handbook of model categories}, volume~27 of {\em Algebra and Applications}.
\newblock Springer, Cham, 2021.

\bibitem[BDGC{\etalchar{+}}22]{Bodnar2022NeuralSheafDiffusion}
C.~Bodnar, F.~Di~Giovanni, B.~P. Chamberlain, P.~Lio, and M.~M. Bronstein.
\newblock Neural sheaf diffusion: A topological perspective on heterophily and oversmoothing in {GNN}s.
\newblock In Alice~H. Oh, Alekh Agarwal, Danielle Belgrave, and Kyunghyun Cho, editors, {\em Advances in Neural Information Processing Systems}, 2022.

\bibitem[BK72]{Bousfield1972}
A.~K. Bousfield and D.~M. Kan.
\newblock {\em Homotopy limits, completions and localizations}.
\newblock Lecture Notes in Mathematics, Vol. 304. Springer-Verlag, Berlin-New York, 1972.

\bibitem[BLO03]{Broto2003}
C.~Broto, R.~Levi, and B.~Oliver.
\newblock The homotopy theory of fusion systems.
\newblock {\em J. Amer. Math. Soc.}, 16(4):779–856, 2003.

\bibitem[{Car}26]{Carrion-Github}
G.~{Carrión~Santiago}.
\newblock Higher-limits-poset.
\newblock \url{https://github.com/GuilleCarrion/Higher-limits-poset}, 2026.

\bibitem[CC17]{CC17}
J.~Cantarero and N.~Castellana.
\newblock Unitary embeddings of finite loop spaces.
\newblock {\em Forum Math.}, 29(2):287--311, 2017.

\bibitem[CD25a]{CD-shell}
G.~{Carrión~Santiago} and A.~{Díaz~Ramos}.
\newblock Cohomology of non-finite cl-shellable posets, 2025.
\newblock Preprint, \href{https://arxiv.org/abs/2509.17574}{arXiv:2509.17574}.

\bibitem[CD25b]{CD23}
G.~{Carrión~Santiago} and A.~{Díaz~Ramos}.
\newblock Mackey functors for posets.
\newblock {\em Revista de la Real Academia de Ciencias Exactas, F{\'\i}sicas y Naturales. Serie A. Matem{\'a}ticas}, 119(2):40, 2025.

\bibitem[Cur14]{Curry14}
J.~M. Curry.
\newblock {\em Sheaves, cosheaves and applications}.
\newblock ProQuest LLC, Ann Arbor, MI, 2014.
\newblock Thesis (Ph.D.)--University of Pennsylvania.

\bibitem[DP15]{Diaz2015}
A.~{Díaz~Ramos} and S.~Park.
\newblock Mackey functors and sharpness for fusion systems.
\newblock {\em Homology Homotopy Appl.}, 17(1):147--164, 2015.

\bibitem[DS95]{Dwyer1995}
W.~G. Dwyer and J.~Spali\'{n}ski.
\newblock Homotopy theories and model categories.
\newblock In {\em Handbook of algebraic topology}, pages 73--126. North-Holland, Amsterdam, 1995.

\bibitem[{Dí}09]{DiazRamos2009}
A.~{Díaz~Ramos}.
\newblock A family of acyclic functors.
\newblock {\em J. Pure Appl. Algebra}, 213(5):783--794, 2009.

\bibitem[ET14]{EverittKHov}
B.~Everitt and P.~Turner.
\newblock The homotopy theory of {K}hovanov homology.
\newblock {\em Algebr. Geom. Topol.}, 14(5):2747--2781, 2014.

\bibitem[ET15]{EverittCel}
B.~Everitt and P.~Turner.
\newblock Cellular cohomology of posets with local coefficients.
\newblock {\em J. Algebra}, 439:134--158, 2015.

\bibitem[GPS{\etalchar{+}}26]{GPSZ26}
E.~Gasparovic, E.~Purvine, R.~Sazdanovic, B.~Wang, Y.~Wang, and L.~Ziegelmeier.
\newblock A survey of simplicial, relative, and chain complex homology theories for hypergraphs.
\newblock {\em Journal of Applied and Computational Topology}, 10(2):7, 2026.

\bibitem[HG21]{HansenGhrist2021}
J.~Hansen and R.~Ghrist.
\newblock Opinion dynamics on discourse sheaves.
\newblock {\em SIAM J. Appl. Math.}, 81(5):2033--2060, 2021.

\bibitem[Hir03]{Hirschhorn2003}
P.~S. Hirschhorn.
\newblock {\em Model categories and their localizations}, volume~99 of {\em Mathematical Surveys and Monographs}.
\newblock American Mathematical Society, Providence, RI, 2003.

\bibitem[HL25]{HumeLio2025}
J.~Hume and P.~Li{\`o}.
\newblock On the sheafification of higher-order message passing, 2025.
\newblock Preprint, \href{https://arxiv.org/abs/2509.23020}{arXiv:2509.23020}.

\bibitem[JM92]{Jackowski1992}
S.~Jackowski and J.~McClure.
\newblock Homotopy decomposition of classifying spaces via elementary abelian subgroups.
\newblock {\em Topology}, 31(1):113--132, 1992.

\bibitem[JMO92a]{JMO1992b}
S.~Jackowski, J.~McClure, and B.~Oliver.
\newblock Homotopy classification of self-maps of {$BG$} via {$G$}-actions. {I}.
\newblock {\em Ann. of Math. (2)}, 135(1):183--226, 1992.

\bibitem[JMO92b]{JMO1992a}
S.~Jackowski, J.~McClure, and B.~Oliver.
\newblock Homotopy classification of self-maps of {$BG$} via {$G$}-actions. {II}.
\newblock {\em Ann. of Math. (2)}, 135(2):227--270, 1992.

\bibitem[JMO94]{Jackowski1994}
S.~Jackowski, J.~McClure, and B.~Oliver.
\newblock Homotopy theory of classifying spaces of compact {L}ie groups.
\newblock In {\em Algebraic topology and its applications}, volume~27 of {\em Math. Sci. Res. Inst. Publ.}, pages 81--123. Springer, New York, 1994.

\bibitem[Kho00]{MR1740682}
M.~Khovanov.
\newblock A categorification of the {J}ones polynomial.
\newblock {\em Duke Math. J.}, 101(3):359--426, 2000.

\bibitem[KL22]{KishimotoLevi22}
D.~Kishimoto and R.~Levi.
\newblock Polyhedral products over finite posets.
\newblock {\em Kyoto J. Math.}, 62(3):615--654, 2022.

\bibitem[LLI{\etalchar{+}}23]{xgi2023}
N.~W. Landry, M.~Lucas, I.~Iacopini, G.~Petri, B.~Alicea, and et~al.
\newblock Xgi: A python package for higher-order networks.
\newblock {\em Journal of Open Source Software}, 8(85):5162, 2023.

\bibitem[Luc24]{lucas_2024_13753744}
M.~Lucas.
\newblock plant-pollinator-mpl-062, 2024.
\newblock Zenodo dataset, \href{https://doi.org/10.5281/zenodo.13753744}{doi:10.5281/zenodo.13753744}.

\bibitem[Mal25]{arxiv_kaggle_dataset}
B.~Maltzan.
\newblock arxiv-kaggle, 2025.
\newblock Zenodo dataset, \href{https://doi.org/10.5281/zenodo.15808027}{doi:10.5281/zenodo.15808027}.

\bibitem[Mit72]{MR294454}
B.~Mitchell.
\newblock Rings with several objects.
\newblock {\em Advances in Math.}, 8:1--161, 1972.

\bibitem[MP12]{May2012}
J.~P. May and K.~Ponto.
\newblock {\em More concise algebraic topology}.
\newblock Chicago Lectures in Mathematics. University of Chicago Press, Chicago, IL, 2012.
\newblock Localization, completion, and model categories.

\bibitem[NR05]{NotbohmRay_DJ}
D.~Notbohm and N.~Ray.
\newblock On {D}avis-{J}anuszkiewicz homotopy types. {I}. {F}ormality and rationalisation.
\newblock {\em Algebr. Geom. Topol.}, 5:31--51, 2005.

\bibitem[Qui67]{Quillen1967}
D.~G. Quillen.
\newblock {\em Homotopical algebra}.
\newblock Lecture Notes in Mathematics, No. 43. Springer-Verlag, Berlin-New York, 1967.

\bibitem[Rob17]{Robinson2017}
M.~Robinson.
\newblock Sheaves are the canonical data structure for sensor integration.
\newblock {\em Information Fusion}, 36:208--224, 2017.

\bibitem[S{\l}o01]{Slominska2001}
J.~S{\l}omi\'{n}ska.
\newblock Decompositions of categories over posets and cohomology of categories.
\newblock {\em Manuscripta Math.}, 104(1):21--38, 2001.

\bibitem[Web00]{MR1759612}
P.~Webb.
\newblock A guide to {M}ackey functors.
\newblock In {\em Handbook of algebra, {V}ol. 2}, volume~2 of {\em Handb. Algebr.}, pages 805--836. Elsevier/North-Holland, Amsterdam, 2000.

\bibitem[Wei94]{Weibel1994}
C.~A. Weibel.
\newblock {\em An introduction to homological algebra}, volume~38 of {\em Cambridge Studies in Advanced Mathematics}.
\newblock Cambridge University Press, Cambridge, 1994.

\bibitem[Woj87]{Wojtkowiak}
Z.~Wojtkowiak.
\newblock On maps from hocolim{F} to {Z}.
\newblock In {\em Algebraic topology, Barcelona, 1986}, volume 1298 of {\em Lecture Notes in Math.}, pages 227--236. Springer, Berlin, 1987.

\bibitem[Yal22]{MR4438061}
E.~Yalç{\i}n.
\newblock Higher limits over the fusion orbit category.
\newblock {\em Adv. Math.}, 406:Paper No. 108482, 69, 2022.

\bibitem[Yal24]{MR4709409}
E.~Yalç{\i}n.
\newblock L{HS}-spectral sequences for regular extensions of categories.
\newblock {\em J. Homotopy Relat. Struct.}, 19(1):1--51, 2024.

\end{thebibliography}
\end{document}